\numberwithin{equation}{section}
\theoremstyle{plain}
\newtheorem{theorem}{Theorem}[section]
\newtheorem{lemma}[theorem]{Lemma}
\newtheorem{corollary}[theorem]{Corollary}
\newtheorem{proposition}[theorem]{Proposition}
\newtheorem{Fact}[theorem]{Fact}
\theoremstyle{definition}
\newtheorem{Def}[theorem]{Definition}
\newtheorem{example}[theorem]{Example}
\newtheorem{remark}[theorem]{Remark}
\newtheorem{?}[theorem]{Problem}
\newtheoremstyle{named}{}{}{\itshape}{}{\bfseries}{.}{.5em}{#1\thmnote{ #3}}
\theoremstyle{named}
\newcommand{\abs}[1]{\left|#1\right|}
\newcommand{\set}[1]{\left\{#1\right\}}
\newcommand{\qbin}[3]{\left[\begin{matrix} #1 \\ #2 \end{matrix} \right]_{#3}}
\newcommand{\f}[1]{\ifthenelse{\equal{#1}{1}}{(q;q)_\infty}{(q^{#1};q^{#1})_{\infty}}}
\def\cD{\mathcal{D}}
\def\cP{\mathcal{P}}
\def\cA{\mathcal{A}}
\def\Ao{\mathcal{A}^{\mathrm{I}}}
\def\A2{\mathcal{A}^{\mathrm{II}}}
\def\cB{\mathcal{B}}
\def\cC{\mathcal{C}}
\def\cO{\mathcal{O}}
\def\bN{\mathbb{N}}
\def\la{\lambda}
\def\sol{\mathrm{sol}}
\def\Dur{\mathrm{Dur}}
\def\dur{\mathrm{dur}}
\def\alt{\mathrm{alt}}
\def\pai{\mathrm{pi}}
\def\wt{\mathrm{wt}}
\begin{document}
\title[SOL in strict partitions II: the $2$-measure and refinements of Euler's theorem]{Sequences of odd length in strict partitions II: the $2$-measure and refinements of Euler's theorem}

\author[S. Fu]{Shishuo Fu}
\address[Shishuo Fu]{College of Mathematics and Statistics, Chongqing University, Chongqing 401331, P.R. China}
\email{fsshuo@cqu.edu.cn}

\author[H. Li]{Haijun Li}
\address[Haijun Li]{College of Mathematics and Statistics, Chongqing University, Chongqing 401331, P.R. China}
\email{lihaijun@cqu.edu.cn}

\date{\today}

\begin{abstract}
The number of sequences of odd length in strict partitions (denoted as $\sol$), which plays a pivotal role in the first paper of this series, is investigated in different contexts, both new and old. Namely, we first note a direct link between $\sol$ and the $2$-measure of strict partitions when the partition length is given. This notion of $2$-measure of a partition was introduced quite recently by Andrews, Bhattacharjee, and Dastidar. We establish a $q$-series identity in three ways, one of them features a Franklin-type involuion. Secondly, still with this new partition statistic $\sol$ in mind, we revisit Euler's partition theorem through the lens of Sylvester-Bessenrodt. Two new bivariate refinements of Euler's theorem are established, which involve notions such as MacMahon's 2-modular Ferrers diagram, the Durfee side of partitions, and certain alternating index of partitions that we believe is introduced here for the first time.
\end{abstract}


\maketitle

\begin{flushright}
{\noindent\footnotesize\itshape Dedicated to George Andrews and Bruce Berndt for their 85th birthdays.}
\end{flushright}

\bigskip\bigskip

\section{Introduction}\label{sec:intro}
In a previous paper~\cite{FL24}, we enumerated strict partitions (i.e., partitions without repeated parts) $\la$ with respect to the size $\abs{\la}$, the length (i.e., the number of parts) $\ell(\la)$, and the number of sequences of odd length $\sol(\la)$. Denote the set of strict partitions by $\cD$, then one of the main results in~\cite{FL24} can be written in terms of generating function as follows:
\begin{align}\label{gf:l-sol}
D^{\sol,\ell}(x,y;q):=\sum_{\la\in\cD}x^{\sol(\la)}y^{\ell(\la)}q^{\abs{\la}}=\sum_{i,j\ge 0}\frac{x^iy^{i+2j}q^{i^2+2ij+2j^2+j}}{(q;q)_i(q^2;q^2)_j}.
\end{align}
Here and in the sequel, we adopt the customary $q$-notations, and mostly follow notations in \cite{FL24} or \cite{andtp} unless otherwise noted. For $\abs{q}<1$, we let
\begin{align*}
& (a;q)_n=(1-a)(1-aq)\cdots(1-aq^{n-1}), \text{ for $n\ge 1$,}\\
& (a;q)_0=1,\text{ and } (a;q)_{\infty}=\lim_{n\to\infty}(a;q)_n.
\end{align*}

In this follow-up paper, we continue our study on this relatively new partition statistic ``$\sol$'', connecting it with the $2$-measure of partitions, a notion introduced by Andrews, Bhattacharjee, and Dastidar in \cite{ABD22}. This link immediately gives rise to the following identity, which could be viewed as our first main result.

\begin{theorem}\label{thm:double sum vs Andrews}
Let $x,y,q$ be complex numbers and $\abs{q}<1$. Then we have
\begin{align}\label{id:double sum vs Andrews}
\sum_{i,j\ge 0}\frac{x^{i+j}y^{i+2j}q^{i^2+2ij+2j^2+j}}{(q;q)_i(q^2;q^2)_j} &= (-yq;q)_{\infty}\sum_{n\ge 0}\frac{(-1)^ny^nq^n(x;q^2)_n}{(q;q)_n}.
\end{align}
\end{theorem}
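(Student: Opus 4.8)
The plan is to prove \eqref{id:double sum vs Andrews} by comparing the coefficient of $x^ky^l$ on the two sides; on the left this is a single monomial, while on the right it is a terminating sum that will collapse under the $q$-Chu--Vandermonde summation.

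First I would extract $[x^ky^l]$ from the left-hand side. Because the exponents $x^{i+j}$ and $y^{i+2j}$ pin down $i+j=k$ and $i+2j=l$, the summation indices are forced to be $j=l-k$ and $i=2k-l$; hence the coefficient is the \emph{single} term $q^{\,k^2+(l-k)^2+(l-k)}/\big((q;q)_{2k-l}(q^2;q^2)_{l-k}\big)$ when $k\le l\le 2k$, and is $0$ otherwise (using the convention $1/(q;q)_m=0$ for $m<0$). Next I would extract $[x^ky^l]$ from the right-hand side. I expand $(x;q^2)_n$ by the finite $q$-binomial theorem, $[x^k](x;q^2)_n=(-1)^kq^{k(k-1)}\qbin{n}{k}{q^2}$, and expand the product by Euler's identity $(-yq;q)_\infty=\sum_{a\ge0}q^{\binom{a+1}{2}}y^a/(q;q)_a$. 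Writing $n=k+m$ and reading off $y^l$ (so the product supplies $y^{l-k-m}$) turns this coefficient into a finite sum over $m$.

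The key simplification is $(q^2;q^2)_n=(q;q)_n(-q;q)_n$, which reconciles the two bases: it gives $\qbin{n}{k}{q^2}/(q;q)_n=(-q;q)_n/\big((q^2;q^2)_k(q^2;q^2)_{n-k}\big)$, and after cancelling the common factor $q^{k^2}$, setting $r=l-k$, and using $(-q;q)_{k+m}=(-q;q)_k(-q^{k+1};q)_m$ together with $\qbin{r}{m}{q}(-1)^mq^{\binom{m}{2}}=q^{rm}(q^{-r};q)_m/(q;q)_m$, the required equality (in the nonzero range) reduces to the finite identity
\[
\sum_{m=0}^{r}\frac{(q^{-r};q)_m\,(-q^{k+1};q)_m}{(q;q)_m\,(-q;q)_m}\,q^m=\frac{q^{\binom{r+1}{2}}(q^{k-r+1};q)_r}{(-q;q)_r}.
\]
The left side is exactly the terminating series ${}_2\phi_1(q^{-r},-q^{k+1};-q;q,q)$, so the $q$-Chu--Vandermonde summation ${}_2\phi_1(q^{-r},b;c;q,q)=(c/b;q)_r\,b^r/(c;q)_r$ with $b=-q^{k+1}$ and $c=-q$ evaluates it to $(q^{-k};q)_r(-q^{k+1})^r/(-q;q)_r$; factoring $(q^{-k};q)_r=(-1)^rq^{-kr+\binom{r}{2}}(q^{k-r+1};q)_r$ yields precisely the right side. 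Finally I would dispose of the degenerate ranges: for $l>2k$ (i.e.\ $r>k$) the factor $(q^{k-r+1};q)_r$ contains $1-q^0=0$, so both coefficients vanish, and for $l<k$ the $m$-sum is empty, so both coefficients are $0$, matching the left side.

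The two coefficient extractions are routine; the main obstacle is the middle step, namely massaging the mixed-base factorials into a single base so that the finite sum is recognizable as a ${}_2\phi_1$ at argument $z=q$. Once $(q^2;q^2)_n=(q;q)_n(-q;q)_n$ is used to expose the denominator parameter $c=-q$ and the reversal of the Gaussian binomial is applied, the computation is entirely standard and $q$-Chu--Vandermonde finishes it. As a remark, the left-hand side equals $D^{\sol,\ell}(\sqrt{x},y\sqrt{x};q)$ by \eqref{gf:l-sol}, which is the combinatorial source of the identity but is not needed for this analytic argument.
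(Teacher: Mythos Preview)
Your argument is correct and is essentially the paper's second proof (the $q$-series proof credited to Krattenthaler): both routes expand $(x;q^2)_n$ by the finite $q$-binomial theorem and $(-yq;q)_\infty$ by Euler's identity, use $(q^2;q^2)_n=(q;q)_n(-q;q)_n$ to pass to a single base, and collapse the remaining inner sum via $q$-Chu--Vandermonde applied to ${}_2\phi_1(-q^{k+1},q^{-r};-q;q,q)$. The only cosmetic difference is that you frame the computation as extracting the coefficient of $x^ky^l$ (so the inner sum is over $m$ with $r=l-k$), whereas the paper keeps the full double series and re-indexes; with the dictionary $k\leftrightarrow i$, $r\leftrightarrow j$ the two computations coincide line by line.
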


Moreover, we establish the following two bivariate refinements of Euler's famed ``odd-distinct'' partition theorem. The undefined terms will be explained in section~\ref{sec:pre}.

\begin{theorem}\label{thm:ref-d}
For integers $n\ge k\ge 1$ and $m\ge 0$, let $\cD_{n,k,m}$ (resp.~$D(n,k,m)$) denote the set (resp.~number) of partitions of $n$ into $k$ distinct parts wherein there are exactly $m$ sequences of odd length. Let 
$\Ao_{n,k,m}$ (resp.~$A_1(n,k,m)$) denote the set (resp.~number) of partitions of $n$ into odd parts whose $2$-modular diagram is of type I with Durfee side being $k$ and $2$-modular sub-Durfee side being $m$. Let $\A2_{n,k,m}$ (resp.~$A_2(n,k,m)$) denote the set (resp.~number) of partitions of $n$ into odd parts whose $2$-modular diagram is of type II with Durfee side being $k$ and $2$-modular sub-Durfee side being $m$. Then
\begin{align*}
A_1(n,k,m) &= D(n,2k,2m),\\
A_2(n,k,m) &= D(n,2k-1,2m+1).
\end{align*}
\end{theorem}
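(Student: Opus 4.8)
The plan is to prove Theorem~\ref{thm:ref-d} bijectively, realizing the Sylvester--Bessenrodt correspondence directly on MacMahon's $2$-modular Ferrers diagram. Let $\Phi$ denote the map sending a partition $\mu$ into odd parts to a partition $\Phi(\mu)$ into distinct parts, obtained by reading off the successive principal hooks along the main diagonal of the $2$-modular diagram of $\mu$, in which each odd part $2c+1$ becomes a row of $c$ cells valued $2$ followed by a terminal cell valued $1$. I would first verify that each diagonal hook splits into an arm-with-corner and a leg whose $2$-modular weights give two distinct parts of $\Phi(\mu)$, so that reading the hooks produces a genuine strict partition. Because every nondegenerate diagonal hook contributes a \emph{pair} of parts, the Durfee side $k$ of the $2$-modular diagram accounts for $2k$ parts; the one exception is when the corner cell on the diagonal is the terminal $1$ rather than a $2$, in which case the innermost hook degenerates and yields a single part. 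This is precisely the type~I versus type~II dichotomy, and it explains the split $2k$ versus $2k-1$ in the number of parts of $\Phi(\mu)$.

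The heart of the argument is the statistic transfer for $\sol$. Here I would exploit the classical fact underlying Sylvester's refinement: the maximal runs of consecutive integers among the distinct parts of $\Phi(\mu)$ — the \emph{sequences} — are governed by the diagonal structure of the $2$-modular diagram of $\mu$. I would track how a sequence of odd versus even length is created or destroyed as one peels successive hooks, and aim to show that the odd-length sequences are counted precisely by the $2$-modular sub-Durfee side $m$, so that $\sol(\Phi(\mu))=2m$ in type~I, while the degenerate innermost hook in type~II contributes one extra odd-length sequence and forces $\sol(\Phi(\mu))=2m+1$. Carrying out this local analysis — proving that the parity and the count of the maximal runs are controlled exactly by the sub-Durfee side, and that this meshes consistently with the type~I/II split of the part count — is the step I expect to be the main obstacle, since it requires a careful case analysis of when two parts emitted by $\Phi$ do or do not differ by~$1$.

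As an independent check, and as an alternative route should the direct bookkeeping prove delicate, I would verify the two identities at the level of generating functions. Extracting from~\eqref{gf:l-sol} the terms with $i=2m,\ j=k-m$ and with $i=2m+1,\ j=k-1-m$ yields the closed products
\begin{align*}
\sum_{n}D(n,2k,2m)\,q^n &= \frac{q^{2k^2+2m^2+k-m}}{(q;q)_{2m}\,(q^2;q^2)_{k-m}}, \\
\sum_{n}D(n,2k-1,2m+1)\,q^n &= \frac{q^{2k^2+2m^2-k+m}}{(q;q)_{2m+1}\,(q^2;q^2)_{k-1-m}}.
\end{align*}
I would then compute $\sum_n A_1(n,k,m)\,q^n$ and $\sum_n A_2(n,k,m)\,q^n$ directly, by decomposing a $2$-modular diagram of the prescribed type into its Durfee square (supplying the leading $q^{2k^2}$), its sub-Durfee square (supplying $q^{2m^2}$ together with the linear correction $\pm(k-m)$ coming from the offset of the two squares and the positions of the $1$-cells), and the complementary arm and leg regions (supplying the $q$-factorial denominators, roughly with the even modulus $(q^2;q^2)$ reflecting the all-$2$ cells and the ordinary $(q;q)$ reflecting the mixed sub-Durfee block). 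Matching these computations against the products above then closes the proof.
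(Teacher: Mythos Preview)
Your generating function ``alternative route'' is essentially the paper's own proof and is correct: the paper extracts exactly the closed products you wrote down from \eqref{gf:l-sol} by setting $(i,j)=(2m,k-m)$ and $(i,j)=(2m+1,k-1-m)$, and on the other side decomposes the $2$-modular diagram of an odd partition into its Durfee square, the region to the right, and the region below (itself analyzed via its Durfee side or its $m\times(m+1)$ rectangle), arriving at $q^{m(2m-1)+k(2k+1)}/((q;q)_{2m}(q^2;q^2)_{k-m})$ in type~I and $q^{m(2m+1)+k(2k-1)}/((q;q)_{2m+1}(q^2;q^2)_{k-1-m})$ in type~II, which match your products term by term. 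So if you promote the backup to the main argument, you are done.

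The primary bijective plan, however, has a genuine obstruction, not merely delicate bookkeeping. Sylvester's hook map $\Phi$ does \emph{not} send the $2$-modular sub-Durfee side to (half of) $\sol$; the statistic it transports to $\sol$ is a different one, the alternating index $\alt$ of Definition~\ref{def:alt}, and this is precisely the content of Theorem~\ref{thm:ref-alt}. A single example already rules out your intended statistic transfer: take $\mu=7+5+3+1\in\Ao_{16,2,1}$, so $\dur_2(\mu)=1$; its $2$-modular diagram has rows $4,3,2,1$, and the two principal hooks give $\Phi(\mu)=(7,5,3,1)$, which has four singleton sequences, hence $\sol(\Phi(\mu))=4\neq 2=2\,\dur_2(\mu)$. (Equivalently, the sets $\Ao_{16,2,1}$ and $\cB_{16,2,2}$ listed after the statement differ in exactly the elements $7+5+3+1$ and $5^2+3^2$, and $\Phi$ bijects the latter set, not the former, onto $\cD_{16,4,2}$.) The paper in fact flags the existence of a bijection realizing $\dur_2\leftrightarrow\lfloor\sol/2\rfloor$ as an open problem in its concluding remarks, noting that Glaisher's map and the iterated Dyson map also fail. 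So the ``careful case analysis'' you anticipate cannot succeed as stated; you would need a genuinely new bijection, and until one is found the generating function comparison is the proof.
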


\begin{theorem}\label{thm:ref-alt}
Let $\cB_{n,k,m}$ (resp.~$B(n,k,m)$) denote the set (resp.~number) of partitions of $n$ into odd parts whose $2$-modular Durfee side is $k$ and alternating index is $m$. Then
\begin{align*}
B(n,\lceil k/2\rceil,m) &= D(n,k,m).
\end{align*}
\end{theorem}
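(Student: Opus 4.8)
The plan is to deduce Theorem~\ref{thm:ref-alt} from Theorem~\ref{thm:ref-d} by showing that the single statistic ``alternating index'' on partitions into odd parts with a fixed $2$-modular Durfee side simultaneously records both the type (I or II) and the $2$-modular sub-Durfee side. The bridge between the two theorems is a parity observation on the quantity $D(n,k,m)$. Each strict partition decomposes uniquely into maximal runs of consecutive integers; the total number of parts $\ell(\la)$ is the sum of the run lengths, and since runs of even length contribute an even amount, $\ell(\la)\equiv\sol(\la)\pmod 2$. Hence $D(n,k,m)=0$ unless $k\equiv m\pmod 2$, and it is exactly this parity that will be matched by the parity of the alternating index.

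First I would split according to the parity of $k$. Writing $k=2j$ or $k=2j-1$, we have $\lceil k/2\rceil=j$ in both cases, so the claim becomes the pair of statements
\[
B(n,j,2m')=D(n,2j,2m'),\qquad B(n,j,2m'+1)=D(n,2j-1,2m'+1),
\]
where, by the parity observation, these are the only values of $m$ for which the right-hand sides can be nonzero. By Theorem~\ref{thm:ref-d} the right-hand sides equal $A_1(n,j,m')$ and $A_2(n,j,m')$ respectively, so the entire theorem reduces to the two set-level (hence numerical) identities
\[
\cB_{n,j,2m'}=\Ao_{n,j,m'},\qquad \cB_{n,j,2m'+1}=\A2_{n,j,m'}.
\]

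The heart of the argument, and the step I expect to be the main obstacle, is to establish these two identities directly from the definitions in section~\ref{sec:pre}. All three families $\cB$, $\Ao$, $\A2$ live inside the same ambient set of partitions of $n$ into odd parts with $2$-modular Durfee side $j$; what must be shown is that on this set the alternating index equals $2\times(\text{$2$-modular sub-Durfee side})$ on type~I diagrams and $2\times(\text{$2$-modular sub-Durfee side})+1$ on type~II diagrams. Concretely, I would read off from the $2$-modular Ferrers diagram how the alternating index accumulates as one passes the principal Durfee block: the indicator that the diagram is of type~II should account for the extra $+1$, namely the odd contribution of the distinguished diagonal cell, while the remaining even part should be exactly twice the size of the nested sub-Durfee block. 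Verifying this cleanly will require tracking the alternating signs along the boundary of the diagram and confirming that they telescope to the sub-Durfee side; this is precisely where the newly introduced ``alternating index'' must be shown to be the right statistic.

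Once these identities are in hand, assembling the result is immediate: for $k=2j$ even,
\[
B(n,\lceil k/2\rceil,m)=B(n,j,2m')=A_1(n,j,m')=D(n,2j,2m')=D(n,k,m),
\]
and symmetrically for $k=2j-1$ odd with $A_2$ in place of $A_1$, while the cases $m\not\equiv k\pmod 2$ hold trivially since both sides vanish. An alternative, more self-contained route would be to compute the trivariate generating function $\sum_{n,k,m}B(n,k,m)\,z^{k}x^{m}q^{n}$ and match it against the known series~\eqref{gf:l-sol} for $D^{\sol,\ell}$; but the reduction above is shorter and reuses Theorem~\ref{thm:ref-d} rather than redoing its analysis.
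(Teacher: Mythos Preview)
Your reduction via Theorem~\ref{thm:ref-d} is sound up to the point where you assert the \emph{set-level} identities $\cB_{n,j,2m'}=\Ao_{n,j,m'}$ and $\cB_{n,j,2m'+1}=\A2_{n,j,m'}$, i.e.\ the pointwise claim $\alt(\la)=2\,\dur_2(\la)$ for type~I and $\alt(\la)=2\,\dur_2(\la)+1$ for type~II. This is false, and the paper's own worked example already refutes it: for $(n,j,m')=(16,2,1)$ the sets $\Ao_{16,2,1}$ and $\cB_{16,2,2}$ listed in the introduction differ. Concretely, $\la=7+5+3+1$ is type~I with $\dur_2(\la)=1$, but computing $\eta=\alpha^{*}\cup\beta=4+3+2+1$ and $\tilde\eta=(1,2,3,4,4)$ gives $\alt(\la)=4$, not $2$; conversely $\la=5^2+3^2$ has $\alt(\la)=2$ but $\dur_2(\la)=2$. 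So the step you flagged as ``the main obstacle'' is not merely hard but impossible: the alternating index and twice the sub-Durfee side agree only in distribution, not partition by partition, and your proposed telescoping along the diagram boundary cannot work.

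What does survive from your argument is the parity dichotomy (type~I $\Leftrightarrow$ $\alt$ even, type~II $\Leftrightarrow$ $\alt$ odd), which the paper also uses. The paper then gives two correct proofs. The second one is precisely the ``alternative route'' you set aside: compute $B^{\alt,\Dur_2}(x,y;q)$ by decomposing each $\la\in\cP_m$ according to its parity index (Lemma~\ref{lem:gf-alt}), obtain the same double sum as in \eqref{gf:two cases}, and conclude $B(n,k,m)=A(n,k,\lfloor m/2\rfloor)$ via Theorem~\ref{thm:ref-d}. The first proof bypasses Theorem~\ref{thm:ref-d} entirely, showing directly that Sylvester's bijection $S$ satisfies $\alt(\la)=\sol(S(\la))$. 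Either route is available to you; the shortcut through a set equality is not.
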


It is easily seen from definition that the two parameters $k$ and $m$ in $\cD_{n,k,m}$ have the same parity, so we have the identity $\bigcup_{k\ge 1,m\ge 0}\cD_{n,k,m}=\bigcup_{k\ge 1,m\ge 0}(\cD_{n,2k,2m}\bigcup\cD_{n,2k-1,2m+1})$. Therefore, summing the identities in either theorem over $m\in[0,k]$ results in the following refinement of Euler's theroem, which is readily implied from Sylvester's bijective proof of Euler's theorem; see \cite[Propositions 2.2]{bes94} and \cite[Theorem 1]{zen05}.
\begin{corollary}[Sylvester-Bessenrodt]
The number of partitions of $n$ into $k$ distinct parts is equal to the number of partitions of $n$ into odd parts whose $2$-modular Durfee side is $\lceil k/2\rceil$.
\end{corollary}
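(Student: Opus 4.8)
The plan is to read the corollary off either refinement essentially for free, by summing out the parameter $m$ (the number of sequences of odd length); the only real work is the parity bookkeeping. Throughout I fix $n$ and a target $2$-modular Durfee side $d$, and I aim to show that the partitions of $n$ into odd parts with Durfee side $d$ are counted by $D(n,2d)+D(n,2d-1)$, which is exactly the number of partitions of $n$ into $k$ distinct parts with $\lceil k/2\rceil=d$, i.e.\ with $k\in\{2d-1,2d\}$.

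First I would isolate the two elementary facts that drive the collapse. A partition into $k$ distinct parts has at most $k$ maximal runs of consecutive parts, hence at most $k$ sequences of odd length, so $D(n,k,m)=0$ whenever $m>k$ and the sums over $m\in[0,k]$ are finite and exhaustive. Secondly, as already recorded in the excerpt, $\cD_{n,k,m}=\emptyset$ unless $k\equiv m\pmod 2$, so among the admissible $m$ only those of the same parity as $k$ contribute. Together these let me rewrite any sum $\sum_m$ as a sum over the correct residue class without altering its value.

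I would then run the summation through Theorem~\ref{thm:ref-d}. By the definitions a partition of $n$ into odd parts with Durfee side $d$ is of exactly one of the types I and II, so the odd-side count is $\sum_{m\ge 0}A_1(n,d,m)+\sum_{m\ge 0}A_2(n,d,m)$. Substituting $A_1(n,d,m)=D(n,2d,2m)$ and $A_2(n,d,m)=D(n,2d-1,2m+1)$ and invoking the parity fact, the first sum sweeps out all even values of $\sol$ and collapses to $D(n,2d)$, while the second sweeps out all odd values and collapses to $D(n,2d-1)$; adding them yields the claim. The route through Theorem~\ref{thm:ref-alt} is a line shorter, since the single identity $B(n,d,m)=D(n,k,m)$ already fuses the two types: summing it over all admissible $m$ and reading off $k\in\{2d-1,2d\}$ from the parity of $m$ reproduces the same total $D(n,2d)+D(n,2d-1)$.

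I do not anticipate a genuine obstacle, since all the bijective and analytic content is carried by the two theorems; the single point needing care is the two-to-one merge $k\mapsto\lceil k/2\rceil$, which must be matched exactly against the type~I/type~II dichotomy (equivalently, against the parity of $m$) so that no fiber is dropped or double counted. As a safeguard I would verify the bookkeeping on a small case such as $n=5$, where the three partitions into odd parts all have Durfee side $1$ and indeed $D(5,1)+D(5,2)=1+2=3$.
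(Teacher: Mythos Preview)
Your proposal is correct and is precisely the argument the paper sketches: note the parity constraint $k\equiv m\pmod 2$ on $\cD_{n,k,m}$, then sum either Theorem~\ref{thm:ref-d} or Theorem~\ref{thm:ref-alt} over $m$ so that the type~I/type~II (equivalently, even-$m$/odd-$m$) pieces collapse to $D(n,2d)$ and $D(n,2d-1)$ respectively. The only cosmetic point is that when you write ``$B(n,d,m)=D(n,k,m)$'' you should make explicit that $k$ is determined by the parity of $m$ (namely $k=2d$ for $m$ even, $k=2d-1$ for $m$ odd), exactly as you go on to do in words.
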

\begin{example}
For $(n,k,m)=(16,4,2)$, the three associated sets of restricted partitions are listed below and each of them contains six partitions, as anticipated by theorems~\ref{thm:ref-d} and \ref{thm:ref-alt}. We abbreviate repeated parts using superscript, so $3^2$ refers to $3+3$.
\begin{align*}
\Ao_{16,2,1} &= \set{5^2+3+1^3,5^2+1^6,7+5+3+1,7+5+1^4,9+5+1^2,7^2+1^2},\\
\cB_{16,2,2} &= \set{5^2+3^2,5^2+3+1^3,5^2+1^6,7+5+1^4,9+5+1^2,7^2+1^2},\\
\cD_{16,4,2} &= \set{10+3+2+1, 9+4+2+1, 8+5+2+1, 8+4+3+1, 7+4+3+2, 6+5+4+1}.
\end{align*}

For $(n,k,m)=(15,3,1)$, the three associated sets of restricted partitions are listed below, each contains five partitions.
\begin{align*}
\A2_{15,2,0} &= \set{11+3+1, 9+3+1^3, 7+3+1^5, 5+3+1^7, 3^2+1^9},\\
\cB_{15,2,1} &= \set{9+3^2, 3^5, 3^4+1^3, 3^3+1^6, 3^2+1^9},\\
\cD_{15,3,1} &= \set{12+2+1, 10+3+2, 8+4+3, 7+6+2, 6+5+4}.
\end{align*}
\end{example}

We organize the rest of the paper as follows. Definitions and some preliminary results are collected in section \ref{sec:pre}. Then we present three proofs of \eqref{id:double sum vs Andrews} in section~\ref{sec:3pf}, one of which is based on the connection between our new statistic $\sol(\lambda)$ and the $2$-measure of $\lambda$ (see~\eqref{id:2measure-sol}). Next in section \ref{sec:pf thm1}, we prove theorem \ref{thm:ref-d} via generating function manipunation, while two proofs of theorem \ref{thm:ref-alt} are given in section \ref{sec:pf thm2}, one through a closer look at Sylvester's bijection, the other uses again generating function.

\section{Preliminaries}\label{sec:pre}

Ferrers diagram~\cite[Chap.~1.3]{andtp} is a commonly used pictorial tool to illustrate a partition. To each partition $\lambda \vdash n$ is associated its {\it Ferrers diagram} denoted as $[\lambda]$, which is the set of $n$ left-aligned unit cells such that there are precisely $\lambda_i$ cells in the $i$-th row, for $1\le i\le \ell(\lambda)$; see Fig.~\ref{fig:Ferrers} for the Ferrers diagrams of two concrete partitions (neglect the letters D and d, they are for future reference). Besides the clear advantage of being intuitive, it is quite often the case that the representation of partitions using Ferrers diagram brings new combinatorial insights on the investigation of various partition problems. We introduce the following notions related to partitions, some of which are better perceived via their associated Ferrers diagrams.

\begin{figure}[ht]
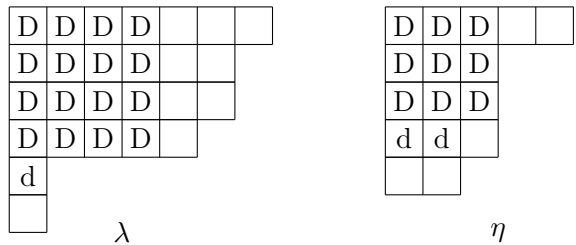

\begin{ferrers}
\addcellrows{7+6+6+5+1+1}
\highlightcellbyletter{1}{1}{D}
\highlightcellbyletter{1}{2}{D}
\highlightcellbyletter{1}{3}{D}
\highlightcellbyletter{1}{4}{D}
\highlightcellbyletter{2}{1}{D}
\highlightcellbyletter{2}{2}{D}
\highlightcellbyletter{2}{3}{D}
\highlightcellbyletter{2}{4}{D}
\highlightcellbyletter{3}{1}{D}
\highlightcellbyletter{3}{2}{D}
\highlightcellbyletter{3}{3}{D}
\highlightcellbyletter{3}{4}{D}
\highlightcellbyletter{4}{1}{D}
\highlightcellbyletter{4}{2}{D}
\highlightcellbyletter{4}{3}{D}
\highlightcellbyletter{4}{4}{D}
\highlightcellbyletter{5}{1}{d}
\addtext{1.5}{-3}{$\la$}
\putright
\addcellrows{5+3+3+3+2}
\highlightcellbyletter{1}{1}{D}
\highlightcellbyletter{1}{2}{D}
\highlightcellbyletter{1}{3}{D}
\highlightcellbyletter{2}{1}{D}
\highlightcellbyletter{2}{2}{D}
\highlightcellbyletter{2}{3}{D}
\highlightcellbyletter{3}{1}{D}
\highlightcellbyletter{3}{2}{D}
\highlightcellbyletter{3}{3}{D}
\highlightcellbyletter{4}{1}{d}
\highlightcellbyletter{4}{2}{d}
\addtext{1.5}{-3}{$\eta$}
\end{ferrers}
\caption{The Ferrers diagrams of partitions $\lambda=7+6^2+5+1^2$ and $\eta=5+3^3+2$}
\label{fig:Ferrers}
\end{figure}

\begin{Def}[{cf.~\cite[Chap.~2.3]{andtp}}]\label{def:Durfee}
For each partition $\la$, the largest square, say a $k\times k$ square, that fits into its Ferrers diagram (including the upper-leftmost cell) is called the {\it Durfee square} of $\la$ (or of $[\la]$), and the number $k$ is called the {\it Durfee side} of $\la$. Alternatively, $k$ is the largest integer $i$ such that $\lambda_i$ is at least $i$. We agree that the Durfee side of the empty partition $\epsilon$ is zero.
\end{Def}

Given a partition $\la$ with Durfee side $k$, we can express $\la$ alternatively as the triple $(k;\alpha,\beta)$, where $\alpha$ and $\beta$ are the subpartitions to the right of and below the Durfee square of $\la$, respectively. Observe that $\abs{\lambda}=k^2+\abs{\alpha}+\abs{\beta}$. For instance, the first partition in Fig.~\ref{fig:Ferrers} can thus be written as $(4;3+2^2+1,1^2)$. Such a triple notation is preferred for the following definition.

\begin{Def}\label{def:subDurfee}
For a partition written as the triple $\la=(k;\alpha,\beta)$, we define its {\it sub-Durfee side} according to the following two cases:
\begin{enumerate}
	\item If $\la_k>k$, we say $\la$ is of type I, and its sub-Durfee side is taken to be the Durfee side of $\beta$.
	\item If $\la_k=k$, we say $\la$ is of type II, and its sub-Durfee side is taken to be the largest integer $j$, such that the $j$ (vertically) by $j+1$ (horizontally) rectangle can fit into $[\beta]$. When no such $j$ exists, like the cases when $\beta$ is empty or $\beta$ only consists of parts of size one, we consider the sub-Durfee side of $\la$ to be zero.
\end{enumerate}
\end{Def}

For better illustation, the two Ferrers diagrams in Fig.~\ref{fig:Ferrers} have been filled in with letters D and d to indicate their Durfee sides and sub-Durfee sides, respectively. Note that $\la$ is of type I while $\eta$ is of type II.

\begin{remark}
Two remarks concerning this newly introduced notion of sub-Durfee side are in order. Firstly, the split between two subcases hinges on whether or not we have $\la_k=k$. This is reminiscent of Andrews's arithmetical approach \cite[Sect.~4]{and72} to the Rogers-Fine identity; see also Zeng's related work~\cite[Sect.~3]{zen05}. Moreover, the subtle discrepancy between two cases in the definition of sub-Durfee side makes it different from, albeit relevent to, the notion of ``successive Durfee square'' considered by Andrews~\cite{and79}; see also \cite[Sect.~3.1]{war97}.
\end{remark}

It was MacMahon~\cite{mac23} who generalized the Ferrers diagram to an $M$-modular setting. In particular, the use of $2$-modular diagram proves to be convenient in analyzing Sylvester's bijective proof of Euler's theorem~\cite{bes94}, and in dealing with partitions with odd parts distinct~\cite{LO09}. We require $2$-modular diagram in the definitions of both sets $\cA_{n,k,m}$ and $\cB_{n,k,m}$, as well as the proofs of theorems~\ref{thm:ref-d} and \ref{thm:ref-alt}.

\begin{Def}
For any integer $M\ge 1$, an $M$-modular (Ferrers) diagram is a Ferrers diagram where all the cells are filled with $M$'s except possibly the last cell of a row, which may be filled with any number from $\set{1,2,\ldots,M}$, with the condition that the numbers along each column are weakly decreasing from top to bottom. Given a partition $\la$, there exists a unique $M$-modular diagram, which we denote as $[\la]_M$, such that the numbers in the $i$-th row sum up to $\la_i$ for $1\le i\le \ell(\la)$.
\end{Def}

As an illustration, the partition $\lambda=7+6^2+5+1^2$ has its associated $2$-modular diagram depicted below.

\begin{ferrers}
\addcellrows{4+3+3+3+1+1}
\highlightcellbyletter{1}{1}{2}
\highlightcellbyletter{1}{2}{2}
\highlightcellbyletter{1}{3}{2}
\highlightcellbyletter{1}{4}{1}
\highlightcellbyletter{2}{1}{2}
\highlightcellbyletter{2}{2}{2}
\highlightcellbyletter{2}{3}{2}
\highlightcellbyletter{3}{1}{2}
\highlightcellbyletter{3}{2}{2}
\highlightcellbyletter{3}{3}{2}
\highlightcellbyletter{4}{1}{2}
\highlightcellbyletter{4}{2}{2}
\highlightcellbyletter{4}{3}{1}
\highlightcellbyletter{5}{1}{1}
\highlightcellbyletter{6}{1}{1}
\addtext{1.5}{-3}{$[\la]_2$}
\end{ferrers}

What prompted us to derive \eqref{id:double sum vs Andrews} was an observation (see~\eqref{id:2measure-sol} below) that links $\sol(\la)$ to the so-called $2$-measure of $\la$. More generally, let us first define what is the $k$-measure of a partition.

\begin{Def}[{cf.~\cite[Defn.~1]{ABD22}}]\label{def:k-measure}
For a positive integer $k$ and a partition $\la$, the {\it $k$-measure} of $\la$ is the length of the longest subsequence (not necessarily consecutive) of parts of $\la$ wherein the difference between any two parts of the subsequence is at least $k$.
\end{Def}

\noindent \textbf{Some Notations}. For the reader's convenience, we collect and recall here most of the notations used in this paper, some of them have already been mentioned in the introduction or appeared in \cite{FL24}. Given any partition $\la$, we let
\begin{itemize}
\item $\cP$ and $\cD$ denote the set of partitions and the set of partitions into distinct parts, respectively;

\item $|\la|$ denote the size of $\la$, that is, the sum of all parts of $\la$;

\item $\ell(\la)$ denote the length of $\la$, that is, the number of parts of $\la$;

\item $\Dur(\la)$ denote the Durfee side of $[\la]$;

\item $\dur(\la)$ denote the sub-Durfee side of $[\la]$;

\item $\Dur_2(\la)$ denote the Durfee side of $[\la]_2$, and we refer to it as the $2$-modular Durfee side of $\la$;

\item $\dur_2(\la)$ denote the sub-Durfee side of $[\la]_2$, and we refer to it as the $2$-modular sub-Durfee side of $\la$;

\item $\mu_{k}(\la)$ denote the $k$-measure of $\la$ for a positive integer $k$.

\item If $\la\in \cD$, we denote $\sol(\la)$ the number of sequences of odd length in $\la$.
\end{itemize}

For our running example $\la=7+6^2+5+1^2$ from Fig.~\ref{fig:Ferrers}, we have $\abs{\la}=26$, $\ell(\la)=6$, $\Dur(\la)=4$, $\dur(\la)=1$, $\Dur_2(\la)=3$, $\dur_2(\la)=1$, and $\mu_2(\la)=3$. If $\sigma=7+6+5+2+1\in\cD$, then we see $\sol(\sigma)=1$ since it contains only one sequence of odd length, namely $(7,6,5)$.

It is easy to see that the $1$-measure of a partition is essentially the number of distinct parts it contains. While for $2$-measure, we make the following key observation.

\begin{proposition}
Given any partition $\la\in \cD$, we have
\begin{align}\label{id:2measure-sol}
2\mu_{2}(\la)=\ell(\la)+\sol(\la).
\end{align}
\end{proposition}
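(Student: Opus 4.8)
The plan is to exploit the decomposition of a strict partition into its maximal \emph{sequences}, the very runs of consecutive integers whose odd-length count defines $\sol$. Write $\la\in\cD$ as the disjoint union of its maximal sequences, and let $s_1,s_2,\ldots,s_r$ be their lengths. Then directly from the definitions the total number of parts is $\ell(\la)=s_1+\cdots+s_r$, while $\sol(\la)$ is the number of indices $i$ with $s_i$ odd. The whole identity will follow once I express $\mu_2(\la)$ in terms of the same data, so the single thing to establish is the right formula for the $2$-measure.

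The heart of the argument is the claim that the $2$-measure is \emph{additive} over the sequences:
\[
\mu_2(\la)=\sum_{i=1}^{r}\left\lceil s_i/2\right\rceil.
\]
For the upper bound, I would intersect any $2$-measure-achieving subsequence with a single sequence of length $s_i$: its selected parts lie among $s_i$ consecutive integers and have pairwise differences at least $2$, so if $t$ of them are chosen, listing them decreasingly as $x_1>\cdots>x_t$ gives $x_1-x_t\ge 2(t-1)$, while also $x_1-x_t\le s_i-1$; hence $t\le\lceil s_i/2\rceil$, and summing over the $r$ sequences bounds $\mu_2(\la)$ from above. For the matching lower bound I would exhibit an admissible subsequence explicitly: within each sequence pick every other part starting from the largest, yielding exactly $\lceil s_i/2\rceil$ parts with internal gaps equal to $2$. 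Because distinct maximal sequences are separated by a gap of at least $2$ (otherwise they would merge into a single sequence), the smallest selected part of one sequence exceeds the largest selected part of the next by at least $2$, so these per-sequence choices concatenate into one valid subsequence of size $\sum_i\lceil s_i/2\rceil$. The two bounds coincide, proving the displayed formula.

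With the formula in hand, the identity is pure arithmetic: for each $i$ one has $2\lceil s_i/2\rceil=s_i$ when $s_i$ is even and $2\lceil s_i/2\rceil=s_i+1$ when $s_i$ is odd. Summing over $i$ therefore contributes $\sum_i s_i=\ell(\la)$ plus one extra unit for each odd $s_i$, giving
\[
2\mu_2(\la)=\sum_{i=1}^{r}2\left\lceil s_i/2\right\rceil=\ell(\la)+\#\{i:s_i\text{ is odd}\}=\ell(\la)+\sol(\la),
\]
as desired.

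The main obstacle is the additivity claim, and within it the upper bound: one must be certain that no subsequence straddling several sequences can beat the sequence-by-sequence optimum. This is precisely where the maximality of the sequences — hence the gap of at least $2$ separating them — is essential, since it decouples the difference constraint across sequences and reduces the optimization to the elementary per-sequence count $\lceil s_i/2\rceil$. Once that decoupling is justified cleanly, both bounds match and the remaining steps are routine.
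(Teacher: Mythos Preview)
Your proof is correct and follows essentially the same route as the paper: decompose $\la$ into its maximal sequences of consecutive integers, express $\mu_2(\la)$ as $\sum_i\lceil s_i/2\rceil$, and then sum the per-sequence identity $2\lceil s_i/2\rceil=s_i+[s_i\text{ odd}]$. The only difference is that the paper simply asserts the formula $\mu_2(\la)=\sum_i\lceil s_i/2\rceil$ ``by the definition of $2$-measure,'' whereas you actually justify it with a clean matching pair of upper and lower bounds; in that sense your write-up is more complete than the original.
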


\begin{proof}
Firstly, for $m\in \bN^+$ suppose that
\begin{align*}
\la=(\la_{1}+\la_{2}+\cdots +\la_{k_{1}})+(\la_{k_{1}+1}+\cdots +\la_{k_{2}})+\cdots +(\la_{k_{m-1}+1}+\cdots +\la_{k_{m}})\in \cD,
\end{align*}
where $k_m=\ell(\la)$, $\la_{k_{i}}-\la_{k_{i}+1}\ge 2$ for $i=1, 2, \ldots, m-1$, and parts inside the same parentheses are consecutive. Furthermore, by the definition of $2$-measure we have (setting $k_0:=0$)
\begin{align*}
2\mu_{2}(\la)&=2\sum_{i=1}^{m}\left(\sum_{\scriptscriptstyle k_{i}-k_{i-1}\text{ odd}}\frac{k_{i}-k_{i-1}+1}{2}+\sum_{\scriptscriptstyle k_{i}-k_{i-1}\text{ even}}\frac{k_{i}-k_{i-1}}{2}\right)\\
&=\sum_{i=1}^{m}\left(\sum_{\scriptscriptstyle k_{i}-k_{i-1}\text{ odd}}(k_{i}-k_{i-1}+1)+\sum_{\scriptscriptstyle k_{i}-k_{i-1}\text{ even}}(k_{i}-k_{i-1})\right)\\
&=\sum_{i=1}^{m}(k_{i}-k_{i-1})+\sum_{i=1}^{m}\sum\limits_{\scriptscriptstyle\text{$k_i-k_{i-1}$ odd}} 1\\
&=\ell(\la)+\sol(\la).
\end{align*}
\end{proof}

\begin{example}
Given $\la=14+13+11+9+6+5+4+2+1\in \cD$, one verifies that $\sol(\la)=3$, $\ell(\la)=9$ and $\mu_{2}(\la)=6$, so $2\mu_{2}(\la)=\ell(\la)+\sol(\la)$ indeed.
\end{example}

We end this section with the following notion of ``union of partitions'', which will be needed for later constructions.
\begin{Def}\label{def:union}
The {\it union} of two partitions $\la$ and $\mu$, denoted as $\la\bigcup\mu$, is the partition whose parts form the set theoretical union of the parts of $\la$ and $\mu$. So for example, given $\la=3+1+1$ and $\mu=4+3+2+1$, we have $\la\bigcup\mu=4+3^2+2+1^3$.
\end{Def}

\section{\texorpdfstring{Three proofs of theorem~\ref{thm:double sum vs Andrews}}{Three proofs of Theorem 1.1}}\label{sec:3pf}
In a follow-up paper to \cite{ABD22}, Andrews, Chern, and Li~\cite{ACL22} deduced the following trivariate generating function of strict partitions that holds for any $k\ge 1$:
\begin{align}\label{gf:k-measure}
\sum_{\la\in\cD}x^{\mu_{k}(\la)}y^{\ell(\la)}q^{|\la|} &=(-yq; q)_{\infty}\sum_{n\geq 0}\frac{(-1)^ny^nq^n(x; q^{k})_{n}}{(q; q)_{n}}.
\end{align}
With the relation \eqref{id:2measure-sol} in mind, we immediately get the following proof of \eqref{id:double sum vs Andrews}.

\begin{proof}[1st proof of theorem~\ref{thm:double sum vs Andrews}]
In view of \eqref{id:2measure-sol}, we make the change of variables $x\to x^{\frac{1}{2}}$ and $y\to x^{\frac{1}{2}}y$ in \eqref{gf:l-sol} to get
\begin{align}\label{id:2-measure-pos}
\sum_{\la\in\cD}x^{\mu_2(\la)}y^{\ell(\la)}q^{\abs{\la}}=\sum_{i,j\ge 0}\frac{x^{i+j}y^{i+2j}q^{i^2+2ij+2j^2+j}}{(q;q)_i(q^2;q^2)_j}.
\end{align}
This combined with \eqref{gf:k-measure} in the case of $k=2$ establishes \eqref{id:double sum vs Andrews}.
\end{proof}

The relation \eqref{id:2measure-sol} and Andrews-Chern-Li's identity \eqref{gf:k-measure} were what led us to discover theorem~\ref{thm:double sum vs Andrews}. But in retrospect, we are able to give two direct proofs of \eqref{id:double sum vs Andrews} without going through \eqref{gf:k-measure}. One proof is via standard $q$-series manipunation and we thank Krattenthaler~\cite{kra24} for sharing it with us. We need Cauchy's $q$-binomial theorem and the following version of $q$-Chu-Vandermonde summation formula.
\begin{proposition}[{\cite[Eq.~(1.3.2); Appendix (II.3)]{GR90}}]
For $\abs{x}<1$ and $\abs{q}<1$, we have
\begin{align}\label{id:qbinom}
_{1}\phi_{0}(a; -; q, x)=\sum_{m\geq 0}\frac{(a; q)_{m}}{(q; q)_{m}}x^m=\frac{(ax; q)_{\infty}}{(x; q)_{\infty}}.
\end{align}
\end{proposition}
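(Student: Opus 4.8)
The plan is to prove \eqref{id:qbinom} by the standard functional-equation method, which identifies both sides as the unique power series solution (normalized to take the value $1$ at $x=0$) of a first-order $q$-difference equation. Write $f(x):=\sum_{m\ge 0}\frac{(a;q)_m}{(q;q)_m}x^m$ for the left-hand side; since $\abs{q}<1$, the coefficients grow sub-exponentially and $f$ defines an analytic function on $\abs{x}<1$.

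First I would extract a functional equation for $f$. Using the elementary factorizations $(a;q)_m=(1-a)(aq;q)_{m-1}$ and $(q;q)_m=(1-q^m)(q;q)_{m-1}$, one checks that the coefficients $c_m:=\frac{(a;q)_m}{(q;q)_m}$ satisfy the recurrence $c_m(1-q^m)=c_{m-1}(1-aq^{m-1})$ with $c_0=1$. Matching the coefficient of $x^m$ on both sides then shows that this recurrence is exactly equivalent to
\begin{align*}
(1-x)f(x)=(1-ax)f(qx),
\end{align*}
which is the $q$-difference equation we shall work with.

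Next I would show that the right-hand side $h(x):=\frac{(ax;q)_\infty}{(x;q)_\infty}$ satisfies the same equation and the same normalization. Peeling off the first factor of each infinite product gives $(x;q)_\infty=(1-x)(qx;q)_\infty$ and $(ax;q)_\infty=(1-ax)(aqx;q)_\infty$, whence
\begin{align*}
(1-x)h(x)=\frac{(ax;q)_\infty}{(qx;q)_\infty}=(1-ax)\frac{(aqx;q)_\infty}{(qx;q)_\infty}=(1-ax)h(qx),
\end{align*}
and clearly $h(0)=1=f(0)$.

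Finally, uniqueness closes the argument. Since $f$ and $h$ are both analytic and nonvanishing near $x=0$, the ratio $r(x):=f(x)/h(x)$ is analytic near the origin and satisfies $r(x)=r(qx)$; iterating gives $r(x)=r(q^n x)$ for all $n$, and letting $n\to\infty$ (this is where $\abs{q}<1$ is essential) forces $r(x)=r(0)=1$, so $f=h$ on $\abs{x}<1$. I expect the only genuine subtlety to be this uniqueness step, namely justifying the passage to the limit. One can sidestep the limiting argument altogether by observing that the recurrence $c_m(1-q^m)=c_{m-1}(1-aq^{m-1})$ determines the Taylor coefficients of \emph{any} solution uniquely from $c_0$, so once $h$ is shown to solve the same equation with $h(0)=1$, its coefficients must coincide with those of $f$ term by term.
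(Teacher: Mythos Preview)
Your proof is correct and is the classical functional-equation argument for Cauchy's $q$-binomial theorem. Note, however, that the paper does not supply its own proof of this proposition: it is quoted as a known result with a citation to Gasper and Rahman~\cite{GR90} and then used as a tool in the second proof of Theorem~\ref{thm:double sum vs Andrews}. So there is nothing to compare against; your argument is essentially the textbook proof (indeed, the proof in \cite[\S1.3]{GR90} proceeds in exactly this way, deriving the same first-order $q$-difference equation $(1-x)f(x)=(1-ax)f(qx)$ and appealing to uniqueness of the normalized power-series solution).
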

\begin{proposition}[{\cite[Eq.~(1.5.3); Appendix (II.6)]{GR90}}]
\begin{align}\label{id:qChuVan}
_{2}\phi_{1}(a, q^{-N}; c; q, q)=\frac{a^N(c/a; q)_{N}}{(c; q)_{N}},
\end{align}
where $N$ is a non-negative integer and we have used the standard basic hypergeometric series notation
$$_2\phi_1(a,b;c;q,x):=\sum_{n\ge 0}\frac{(a;q)_n(b;q)_n}{(q;q)_n(c;q)_n}x^n.$$
\end{proposition}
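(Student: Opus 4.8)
The plan is to deduce this terminating summation directly from Cauchy's $q$-binomial theorem \eqref{id:qbinom}, which we have just recorded. I would begin by observing that the left-hand side is in fact a finite sum: since $(q^{-N};q)_n=0$ for every $n>N$, only the terms with $0\le n\le N$ contribute, so no convergence issue arises and the identity is purely algebraic. The two tools I would prepare in advance are the reflection formulas
\[
\frac{(q;q)_n}{(q;q)_{n-k}}=(-1)^k q^{\,nk-\binom k2}(q^{-n};q)_k,
\qquad
(bq^{\,n-k};q)_k=(-b)^k q^{\,k(n-k)+\binom k2}(q^{1-n}/b;q)_k,
\]
valid for $0\le k\le n$, each verified at once by pairing up factors; these are the devices that turn a falling $q$-factorial into a rising one carrying a $q^{-n}$.

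First I would expand a product of two copies of \eqref{id:qbinom}. Writing $\frac{(abz;q)_\infty}{(z;q)_\infty}=\frac{(abz;q)_\infty}{(bz;q)_\infty}\cdot\frac{(bz;q)_\infty}{(z;q)_\infty}$ and applying \eqref{id:qbinom} to each of the three quotients, comparison of the coefficients of $z^n$ yields the Cauchy convolution $\frac{(ab;q)_n}{(q;q)_n}=\sum_{k=0}^n\frac{(a;q)_k}{(q;q)_k}\,b^k\,\frac{(b;q)_{n-k}}{(q;q)_{n-k}}$. Next I would feed the two reflection formulas into this convolution, rewriting $(b;q)_{n-k}=(b;q)_n/(bq^{\,n-k};q)_k$ together with $1/(q;q)_{n-k}$, while keeping $k$ as the summation index. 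The satisfying point is that all the signs and the powers of $b$ cancel, and the accumulated $q$-exponent $nk-\binom k2-k(n-k)-\binom k2$ collapses to $k$, so the convolution becomes
\[
\sum_{k=0}^{n}\frac{(a;q)_k\,(q^{-n};q)_k}{(q;q)_k\,(q^{1-n}/b;q)_k}\,q^{k}=\frac{(ab;q)_n}{(b;q)_n}.
\]
Finally I would set $c=q^{1-n}/b$ and $N=n$: the left-hand side is exactly ${}_2\phi_1(a,q^{-N};c;q,q)$, while the right-hand side simplifies to $a^{N}(c/a;q)_N/(c;q)_N$ upon applying the length-$n$ reflection $(q^{1-n}/x;q)_n=(-1/x)^n q^{\,n-n^2+\binom n2}(x;q)_n$ to both $(ab;q)_n$ and $(b;q)_n$, whereupon the shared powers of $q$ cancel and a clean factor $a^N$ survives. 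This is \eqref{id:qChuVan}.

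I expect the reflection-and-cancellation step to be the main obstacle, not because it is deep but because it is error-prone: one must track the signs, the powers of $b$, and the two half-integer exponents $\binom k2$ with care to confirm that the argument really is $q$ and that the free parameter $c$ enters only through the substitution $b=q^{1-n}/c$. A less slick but entirely self-contained alternative is induction on $N$: the case $N=0$ is trivial, and the inductive step rests on the recursion $(q^{-N};q)_k=\frac{1-q^{-N}}{1-q^{k-N}}(q^{1-N};q)_k$ feeding a contiguous relation for the terminating ${}_2\phi_1$. In either route I would close with the sanity check $N=1$, where both sides collapse to $\frac{a-c}{1-c}$.
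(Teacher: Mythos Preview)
The paper does not prove this proposition at all: it is quoted from Gasper--Rahman \cite{GR90} as a known tool and then applied in the second proof of Theorem~\ref{thm:double sum vs Andrews}. So there is no ``paper's own proof'' to compare against.

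That said, your argument is correct. The Cauchy convolution
\[
\frac{(ab;q)_n}{(q;q)_n}=\sum_{k=0}^{n}\frac{(a;q)_k}{(q;q)_k}\,b^k\,\frac{(b;q)_{n-k}}{(q;q)_{n-k}}
\]
does follow by comparing coefficients in the factorisation $\frac{(abz;q)_\infty}{(z;q)_\infty}=\frac{(abz;q)_\infty}{(bz;q)_\infty}\cdot\frac{(bz;q)_\infty}{(z;q)_\infty}$ via \eqref{id:qbinom}, your two reflection formulas are valid (the exponent $k(n-k)+\binom{k}{2}$ in the second one equals $k(n-1)-\binom{k}{2}$, so either form works), and the bookkeeping collapsing the $q$-power to $q^{k}$ checks out. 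The final reversal $(q^{1-N}/x;q)_N=(-1/x)^N q^{-\binom{N}{2}}(x;q)_N$ applied to numerator and denominator indeed leaves exactly the factor $a^N$, matching \eqref{id:qChuVan}. Your $N=1$ sanity check is also fine. This derivation is essentially the classical one (deducing $q$-Chu--Vandermonde from the $q$-binomial theorem), so it fits well with the paper's decision simply to cite the result.
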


\begin{proof}[2nd proof of theorem~\ref{thm:double sum vs Andrews}]
One application of $q$-binomial theorem \eqref{id:qbinom} gives us
\begin{align*}
_{1}\phi_{0}(q^{-n}; -; q, x)=\sum_{i\geq 0}\frac{(q^{-n}; q)_{i}}{(q; q)_{i}}x^i=(xq^{-n}; q)_{n}.
\end{align*}
Let $x\rightarrow xq^n$ and then let $q\rightarrow q^2$ we can obtain that
\begin{align*}
(x; q^2)_{n}=\sum_{i\geq 0}\frac{(q^{-2n}; q^2)_{i}}{(q^2; q^2)_{i}}(xq^{2n})^{i}.
\end{align*}
Moreover note that for $0\le i\le n$,
\begin{align*}
(q^{-2n}; q^2)_{i}&=(1-q^{-2n})(1-q^{-2n+2})\cdots (1-q^{-2n+2i-2})\\
&=(-1)^{i}q^{-2ni+i^2-i}(1-q^{2n})(1-q^{2n-2})\cdots (1-q^{2n-2i+2})\\
&=(-1)^iq^{-2ni+i^2-i}\frac{(q^2; q^2)_{n}}{(q^2; q^2)_{n-i}}.
\end{align*}
Then we have 
\begin{equation}
(x; q^2)_{n}=\sum_{i\geq 0}(-x)^{i}q^{i^2-i}\frac{(q^2; q^2)_{n}}{(q^2; q^2)_{i}(q^2; q^2)_{n-i}}.\label{2}
\end{equation}
Next recall the $q$-exponential function (which is also a special case of \eqref{id:qbinom})
\begin{align*}
E_{q}(z)=\sum_{n\geq 0}\frac{q^{\binom{n}{2}}z^n}{(q; q)_{n}}=(-z; q)_{\infty}.
\end{align*}
Hence,
\begin{equation}
(-yq; q)_{\infty}=\sum_{m\geq 0}\frac{q^{\binom{m}{2}}(yq)^m}{(q; q)_{m}}.\label{3}
\end{equation}
We plug \eqref{2} and \eqref{3} to the right hand side of \eqref{id:double sum vs Andrews} to get 
\begin{align*}
\sum_{m\geq 0}\frac{q^{\binom{m}{2}}(yq)^m}{(q; q)_{m}}\sum_{n\geq 0}\frac{(-1)^ny^nq^n}{(q; q)_{n}}\sum_{i= 0}^{n}(-x)^{i}q^{i^2-i}\frac{(q^2; q^2)_{n}}{(q^2; q^2)_{i}(q^2; q^2)_{n-i}}.
\end{align*}
Setting $m+n=j$ yields
\begin{align*}
\sum_{j\geq 0}\frac{q^{\binom{j-n}{2}}(yq)^{j-n}}{(q; q)_{j-n}}\sum_{n=0}^{j}\frac{(-1)^ny^nq^n}{(q; q)_{n}}\sum_{i= 0}^{n}(-x)^{i}q^{i^2-i}\frac{(q^2; q^2)_{n}}{(q^2; q^2)_{i}(q^2; q^2)_{n-i}}.
\end{align*}
After some simplification, we have
\begin{align*}
\sum_{j\geq 0}\sum_{n=0}^{j}\sum_{i=0}^{n}y^{j}x^{i}\frac{q^{j+i^2-i}}{(q^2; q^2)_{i}}\frac{(-1)^{n+i}q^{\binom{j-n}{2}}(-q; q)_{n}}{(q; q)_{j-n}(q^2; q^2)_{n-i}}.
\end{align*}
Exchange summation order, we have
\begin{equation}
\sum_{j\ge i\ge 0}y^{j}x^{i}\frac{q^{j+i^2-i}}{(q^2; q^2)_{i}}\sum_{i\le n\le j}\frac{(-1)^{n+i}q^{\binom{j-n}{2}}(-q; q)_{n}}{(q; q)_{j-n}(q^2; q^2)_{n-i}}.\label{5}
\end{equation}
We know that when $i\le n\le j$,
\begin{align}
& (q; q)_{j-i}=(q; q)_{j-n}(q^{j-n+1}; q)_{n-i},\label{6}\\
& (-1)^{n-i}q^{\binom{j-n}{2}}(q^{j-n+1}; q)_{n-i}=q^{\binom{j-i}{2}+(n-i)}(q^{-j+i}; q)_{n-i},\label{7}
\end{align}
and
\begin{align}
(-q; q)_{n} &=(-q; q)_{i}(-q^{i+1}; q)_{n-i}.\label{8}
\end{align}
Apply equations \eqref{6}, \eqref{7} and \eqref{8} to the expression \eqref{5} to derive
\begin{align}
&\sum_{j\ge i\ge 0}y^{j}x^{i}\frac{q^{j+i^2-i}}{(q^2; q^2)_{i}}\frac{q^{\binom{j-i}{2}}(-q; q)_{i}}{(q; q)_{j-i}}\sum_{i\le n\le j}\frac{q^{n-i}(-q^{i+1}; q)_{n-i}(q^{-j+i}; q)_{n-i}}{(q; q)_{n-i}(-q; q)_{n-i}}\nonumber\\
&=\sum_{j\ge i\ge 0}y^jx^i\frac{q^{\binom{j-i}{2}+j+i^2-i}}{(q; q)_{i}(q; q)_{j-i}} {}_{2}\phi_{1}(-q^{i+1}, q^{-j+i}; -q; q, q)\nonumber\\
&=\sum_{i,j \ge 0}y^{i+j}x^i\frac{q^{\binom{j+1}{2}+i^2}}{(q; q)_{i}(q; q)_{j}} {}_{2}\phi_{1}(-q^{i+1}, q^{-j}; -q; q, q) \label{10}
\end{align}
Putting $(a, N, c)\to (-q^{i+1}, j, -q)$ in \eqref{id:qChuVan} we get 
\begin{align}\label{12}
_{2}\phi_{1}(-q^{i+1}, q^{-j}; -q; q, q)=\frac{(-q^{i+1})^{j}(q^{-i}; q)_{j}}{(-q; q)_{j}}.
\end{align}
Here the factor $(q^{-i};q)_j$ indicates that we shall require $j\le i$ to get non-vanishing terms. Hence we see that
\begin{equation}
(-1)^{j}\frac{(q^{-i}; q)_{j}}{(q; q)_{i}}=\frac{q^{\binom{j}{2}-ij}}{(q; q)_{i-j}}.\label{13}
\end{equation}
Plug \eqref{12} and \eqref{13} back to \eqref{10} and simplify to obtain
\begin{align*}
\sum_{i, j\geq 0}y^{i+j}x^{i}\frac{q^{i^2+j^2+j}}{(q; q)_{i-j}(q^2; q^2)_{j}}.
\end{align*}
Let $i\to i+j$ to arrive at the left hand side of \eqref{id:double sum vs Andrews}.
\end{proof}

Our final proof of theorem \ref{thm:double sum vs Andrews} is purely combinatorial. Namely, we first interpret the right hand side of \eqref{id:double sum vs Andrews} as the generating function of certain signed pairs of labeled partitions. Then we construct a sign reversing, weight preserving involution, such that the fixed points under this involution turn out to be generated by the left hand side of \eqref{id:double sum vs Andrews}.

We begin by rewritting the right hand side of \eqref{id:double sum vs Andrews} as follows.
\begin{align*}
(-yq; q)_{\infty}\sum_{n\geq 0}\frac{(-1)^ny^nq^n(x; q^{2})_{n}}{(q; q)_{n}}
&=(-yq; q)_{\infty}\sum_{n\geq 0}\frac{q^n(xy-y)(xyq^2-y)\cdots (xyq^{2n-2}-y)}{(q; q)_{n}}.
\end{align*}
Now we see $(-yq; q)_{\infty}$ generates partitions, say $\lambda\in\cD$, wherein each part receives a label $y$. The summation, on the other hand, generates the set of partitions that we denote as $\cC$, whose member, say $\eta\in\cC$, satisfies the following conditions.
\begin{enumerate}[(i)]
\item Each part is labeled as either $xy$ or $y$. If $\eta_{i}$ is labeled as $xy$, then $\eta_{i-1}-\eta_{i}\geq 2$. 

\item Set $\eta_{0}=\infty$ to make sure $\eta_{1}$ can be labeled as either $xy$ or $y$. 

\item Suppose there are $m$ parts of $\eta$ that are labeled as $y$, then the sign of $\eta$ is taken to be $(-1)^m$. 
\end{enumerate}

Let us take $(3+2,6_x+3+3+1_x)\in\cD\times\cC$ for instance, where the subscript $x$ indicates this part is labeled as $xy$ and all remaining parts are labeled as $y$. This pair has weight $x^2y^6q^{18}$ with sign $(-1)^3=-1$. In general, the power of $x$ records the number of parts with label $xy$ while the powers of $y$ and $q$ are given by $\ell(\la)+\ell(\eta)$ and $\abs{\la}+\abs{\eta}$, respectively. We denote the weight of $(\la,\eta)$ as $\wt(\la,\eta)$.

We are going to construct an involution $\varphi$ on $\cD\times\cC$ such that the partition pair $(\la, \eta)\in (\cD\times\cC)\setminus (\cD'\times\cC')$ has the same weight and opposite sign with $\varphi((\la, \eta))$, where $\cD'\times\cC'$ is the set of fixed points under $\varphi$. 

Given any pair $(\la,\eta)\in\cD\times\cC$, find the smallest part, say $a$, of $\la$ such that $\eta$ does not contain $(a-1)_{x}$ as a part. Let $b$ be the smallest part of $\eta$ that is labeled as $y$. If there is no such $a$ or $b$, then take $a=\infty$ or $b=\infty$. There are three cases to consider. In cases (1) and (2), we take $\varphi((\la,\eta))=(\hat{\la},\hat{\eta})$.


\begin{enumerate}
\item If $a>b$, then delete $b$ from $\eta$ and insert $b$ into $\la$ as a new part. This outputs a new pair that we denote as $(\hat{\la}, \hat{\eta})$.

\item If $a\leq b$, then delete $a$ from $\la$ and insert $a$ into $\eta$ as a new part with label $y$. The new pair thus derived is denoted as $(\hat{\la}, \hat{\eta})$.

\item If $a=b=\infty$, then we treat $(\la,\eta)$ as a fixed point, thus $\varphi((\la,\eta))=(\la,\eta)$.
\end{enumerate}


\begin{proof}[3rd proof of theorem~\ref{thm:double sum vs Andrews}]
It suffices to characterize $\cD'\times\cC'$ and show that $\varphi$ as defined above is indeed a sign reversing, weight preserving involution. 

For the first goal, note that if $(\la,\eta)\in\cD'\times\cC'$, then $a=\infty$, which means $\eta$ must contain part $(\la_i-1)_x$ for each part $\la_i$ of $\la$. And all parts of $\eta$ are labeled as $xy$ since $b=\infty$ (in particular, $(\la,\eta)$ has a positive sign). Moreover, the parts of $\la$ are dijoint from parts of $\eta$. With all these being noticed, it should be clear that taking the union (recall definition~\ref{def:union})
\begin{align*}
\cD'\times\cC' &\to \cD\\
(\la,\eta) &\mapsto \tau:=\la\bigcup\eta
\end{align*}
is a bijection\footnote{Since parts from $\la$ and parts from $\eta$ have different labels, we can uniquely recover the pair $(\la,\eta)$ from the union $\la\bigcup\eta$.} between $\cD'\times\cC'$ and $\cD$, such that the sequences in $\tau$ consist of parts with alternating labels $y$ and $xy$, and every sequence ends with a part labeled $xy$. This alternating feature is nicely captured by the $2$-measure. In summary, we have
\begin{align*}
\sum_{(\la,\eta)\in\cD'\times\cC'}\wt(\la,\eta)=\sum_{\tau\in\cD} x^{\mu_2(\tau)}y^{\ell(\tau)}q^{\abs{\tau}}.
\end{align*}
This is exactly the left hand side of \eqref{id:double sum vs Andrews} in view of \eqref{id:2-measure-pos}.

Next, to show that $\varphi$ meets our requirements, we leave the verification of the following items to the reader.
\begin{itemize}
	\item In cases (1) and (2), $\varphi$ is well-defined. I.e., $(\hat{\la},\hat{\eta})$ belongs to $\cD\times\cC$ in both cases.
	\item If $(\la,\eta)$ is in case (1), then its image $(\hat{\la},\hat{\eta})$ is in case (2); if $(\la,\eta)$ is in case (2), then its image $(\hat{\la},\hat{\eta})$ is in case (1).
	\item For cases (1) and (2), $(\la,\eta)$ and $(\hat{\la},\hat{\eta})$ have the same weight but opposite sign.
	\item We have $\varphi^2((\la,\eta))=(\la,\eta)$ for all pairs $(\la,\eta)\in\cD\times\cC$.
\end{itemize}
\end{proof}

\begin{remark}
Our third proof of theorem~\ref{thm:double sum vs Andrews} could also be viewed as a combinatorial proof of \eqref{gf:k-measure} in the case of $k=2$. In the same vein, for another identity (over the set $\cP$ of all partitions rather than $\cD$) from the same paper of Andrews-Chern-Li~\cite[$k=2$ case of Eq.~(1)]{ACL22} 
\begin{align}\label{id:2measure-P}
\sum_{\la\in \cP}x^{\mu_{2}(\la)}y^{\ell(\la)}q^{|\la|}=\frac{1}{(yq; q)_{\infty}}\sum_{n\geq 0}\frac{(-1)^ny^nq^{n(n+1)/2}(x; q)_{n}}{(q; q)_n},
\end{align}
one can also construct an involution so as to explain the cancellation happened on the right hand side of \eqref{id:2measure-P}, and then recognize the fixed points as generated by the left hand side of \eqref{id:2measure-P}, i.e., the enumerator of ordinary partitions with respect to the $2$-measure, the length, and the size. We leave the details to the interested reader.
\end{remark}

\begin{example}
Among all the pairs $(\la,\eta)$ with $|\la|+|\eta|=6$, the fixed points are $(\epsilon, 6_{x})$, $(\epsilon, 5_x+1_x)$, $(\epsilon, 4_x+2_x)$, and $(2, 3_x+1_x)$. The remaining ones are paired up via our involution $\varphi$ as follows.
\medskip
\begin{minipage}[c]{0.5\textwidth}
\centering
\begin{tabular}{r|l}
$-$ & $+$\\
$(\epsilon, 6)$ & $(6, \epsilon)$\\
$(\epsilon, 5_x+1)$ & $(1, 5_{x})$\\
$(\epsilon, 5+1_{x})$ & $(5, 1_{x})$\\
$(\epsilon, 4_x+2)$ & $(2, 4_x)$\\
$(\epsilon, 4+2_x)$ & $(4, 2_x)$\\
$(\epsilon, 3_x+3)$ & $(3, 3_x)$\\
$(\epsilon, 3+2+1)$ & $(1, 3+2)$\\
$(\epsilon, 4+1^2)$ & $(1, 4+1)$\\
$(\epsilon, 4_x+1_x+1)$ & $(1, 4_x+1_x)$\\
$(\epsilon, 3_x+1^3)$ & $(1, 3_x+1^2)$\\
$(\epsilon, 3+1_x+1^2)$ & $(1, 3+1_x+1)$\\
$(\epsilon, 2^3)$ & $(2, 2^2)$\\
$(\epsilon, 2_x+2+1^2)$ & $(1, 2_x+2+1)$\\
$(\epsilon, 2+1^4)$ & $(1, 2+1^3)$\\
$(\epsilon, 1_x+1^5)$ & $(1, 1_x+1^4)$\\
$(1, 5)$ & $(\epsilon, 5+1)$\\
$(1, 4_x+1)$ & $(\epsilon, 4_x+1^2)$\\
$(1, 4+1_x)$ & $(\epsilon, 4+1_x+1)$\\
$(1, 3_x+2)$ & $(\epsilon, 3_x+2+1)$\\
$(1, 3+1^2)$ & $(\epsilon, 3+1^3)$\\
$(1, 3_x+1_x+1)$ & $(\epsilon, 3_x+1_x+1^2)$\\
\end{tabular}
\end{minipage}
\begin{minipage}[c]{0.5\textwidth}
\centering
\begin{tabular}{r|l}
$-$ & $+$\\
$(1, 2^2+1)$ & $(\epsilon, 2^2+1^2)$\\
$(1, 2_x+1^3)$ & $(\epsilon, 2_x+1^4)$\\
$(1, 1^5)$ & $(\epsilon, 1^6)$\\
$(2, 4)$ & $(\epsilon, 4+2)$\\
$(2, 3_x+1)$ & $(2+1, 3_x)$\\
$(2, 3+1_x)$ & $(3+2, 1_x)$\\
$(2, 2_x+2)$ & $(\epsilon, 2_x+2^2)$\\
$(2, 2+1^2)$ & $(2+1, 2+1)$\\
$(2, 1_x+1^3)$ & $(2+1, 1_x+1^2)$\\
$(3, 3)$ & $(\epsilon, 3^2)$\\
$(3, 2_x+1)$ & $(3+1, 2_x)$\\
$(3, 1^3)$ & $(3+1, 1^2)$\\
$(4, 2)$ & $(4+2, \epsilon)$\\
$(4, 1_x+1)$ & $(4+1, 1_x)$\\
$(5, 1)$ & $(5+1, \epsilon)$\\
$(2+1, 3)$ & $(2, 3+1)$\\
$(2+1, 2_x+1)$ & $(2, 2_x+1^2)$\\
$(2+1, 1^3)$ & $(2, 1^4)$\\
$(3+1, 2)$ & $(3, 2+1)$\\
$(3+1, 1_x+1)$ & $(3, 1_x+1^2)$\\
$(4+1, 1)$ & $(4, 1^2)$\\
$(3+2, 1)$ & $(3+2+1, \epsilon)$\\
\end{tabular}
\end{minipage}
\end{example}

\section{A proof of Theorem~\ref{thm:ref-d}}\label{sec:pf thm1}
The proof given here is via generating functions. Namely, we derive the generating functions of partitions from $\Ao_{n,k,m}$ and $\A2_{n,k,m}$, respectively, then connect them with the double-series expression of $D^{\sol,\ell}(x,y;q)$ given in \eqref{gf:l-sol}, and compare the coefficients to finish the proof.

Suppose $\la$ is a partition into odd parts. For our purpose, it is convenient to draw the $2$-modular diagram of $\la$ in such a way that the right border of the Durfee square of $[\la]_2$ is filled with $1$'s, instead of the usual convention to place $1$'s in the last cell of each row. So for $\la=9+7^2+5+1^2$, its $2$-modular diagram is drawn as below. We note in passing that this little ``twist'' in our way of drawing the Ferrers diagram were previously exploited by both Zeng~\cite[cf. Fig.~2]{zen05} and Alladi~\cite[cf. Fig.~1 and Fig.~2]{all20}.

\begin{figure}[ht]
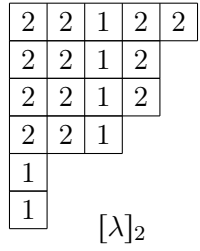

\begin{ferrers}
\addcellrows{5+4+4+3+1+1}
\highlightcellbyletter{1}{1}{2}
\highlightcellbyletter{1}{2}{2}
\highlightcellbyletter{1}{3}{1}
\highlightcellbyletter{1}{4}{2}
\highlightcellbyletter{1}{5}{2}
\highlightcellbyletter{2}{1}{2}
\highlightcellbyletter{2}{2}{2}
\highlightcellbyletter{2}{3}{1}
\highlightcellbyletter{2}{4}{2}
\highlightcellbyletter{3}{1}{2}
\highlightcellbyletter{3}{2}{2}
\highlightcellbyletter{3}{3}{1}
\highlightcellbyletter{3}{4}{2}
\highlightcellbyletter{4}{1}{2}
\highlightcellbyletter{4}{2}{2}
\highlightcellbyletter{4}{3}{1}
\highlightcellbyletter{5}{1}{1}
\highlightcellbyletter{6}{1}{1}
\addtext{1.5}{-3}{$[\la]_2$}
\end{ferrers}
\caption{The $2$-modular diagram of $\lambda=9+7^2+5+1^2$ with right border $1$'s}
\label{fig:right border}
\end{figure}

Let $\la$ be a non-empty partition into odd parts with $\Dur_2(\la)=k$ (so $k\ge 1$), we consider the following two cases. Remember that we analyze $2$-modular diagrams drawn with right border $1$'s as in Fig.~\ref{fig:right border}, so in particular, the Durfee square contributes $k(2k-1)$ to the total weight $\abs{\la}$.

\begin{enumerate}[(I)]
	\item $\la_k>2k-1$, so $[\la]_2$ is of type I. We can write $[\la]_2$ as the triple $(k;\alpha,\beta)$, where $\abs{\la}=k(2k-1)+\abs{\alpha}+\abs{\beta}$, $\alpha$ is a partition into exactly $k$ even parts, and $\beta$ is a partition into odd parts not exceeding $2k-1$. Clearly $\alpha$ is generated by $q^{2k}/(q^2;q^2)_k$, while $\beta$ is generated, according to $\dur_2(\la)$ (i.e., the Durfee side of $[\beta]_2$), by
	$$\sum_{0\le m\le k}\frac{x^mq^{m(2m-1)}}{(q;q^2)_m}\qbin{k}{m}{q^2}.$$
	Recall that $\qbin{a}{b}{q}=\dfrac{(q;q)_a}{(q;q)_b(q;q)_{a-b}}$ is the $q$-binomial coefficient and it generates all partitions whose Ferrers diagrams are confined in the $b\times(b-a)$ rectangle. Putting everything together we have
	$$1+\sum_{k\ge 1,m\ge 0}A_1(n,k,m)x^my^kq^n=\sum_{0\le m\le k}\frac{x^my^kq^{m(2m-1)+k(2k+1)}}{(q;q)_{2m}(q^2;q^2)_{k-m}}.$$
	\item $\la_k=2k-1$, so $[\la]_2$ is of type II. Again we write $[\la]_2$ as the triple $(k;\alpha,\beta)$, where $\alpha$ is a partition into at most $k-1$ even parts, generated by $1/(q^2;q^2)_{k-1}$, while $\beta$ is generated, according to $\dur_2(\la)$ (i.e., the largest $m$ such that an $m\times (m+1)$ rectangle fits in $[\beta]_2$), by
	$$\sum_{0\le m\le k-1}\frac{x^mq^{m(2m+1)}}{(q;q^2)_{m+1}}\qbin{k-1}{m}{q^2}.$$
	Altogether for this case we have
	$$\sum_{k\ge 1,m\ge 0}A_2(n,k,m)x^my^kq^n=\sum_{0\le m\le k-1}\frac{x^my^kq^{m(2m+1)+k(2k-1)}}{(q;q)_{2m+1}(q^2;q^2)_{k-m-1}}.$$
\end{enumerate}
Now we combine two cases and sum over all $k\ge 0$ to get (the term $1$ corresponds to the empty partition $\epsilon$ with $k=m=0$)
\begin{align}
&\phantom{==} 1+\sum_{k\ge 1}y^k \sum_{0\le m\le k}\frac{x^mq^{m(2m-1)+k(2k+1)}}{(q;q)_{2m}(q^2;q^2)_{k-m}}+\sum_{k\ge 1}y^k \sum_{1\le m\le k}\frac{x^{m-1}q^{(m-1)(2m-1)+k(2k-1)}}{(q;q)_{2m-1}(q^2;q^2)_{k-m}} \nonumber \\
&=\sum_{m\ge 0}\sum_{j\ge 0}\frac{x^my^{m+j}q^{4m^2+4mj+2j^2+j}}{(q;q)_{2m}(q^2;q^2)_j}+\sum_{m\ge 1}\sum_{j\ge 0}\frac{x^{m-1}y^{m+j}q^{(2m-1)^2+2(2m-1)j+2j^2+j}}{(q;q)_{2m-1}(q^2;q^2)_j} \label{gf:two cases} \\
&=\sum_{\text{even }i\ge 0}\sum_{j\ge 0}\frac{x^{\frac{i}{2}}y^{\frac{i}{2}+j}q^{i^2+2ij+2j^2+j}}{(q;q)_i(q^2;q^2)_j}+\sum_{\text{odd }i> 0}\sum_{j\ge 0}\frac{x^{\frac{i-1}{2}}y^{\frac{i+1}{2}+j}q^{i^2+2ij+2j^2+j}}{(q;q)_i(q^2;q^2)_j}.\label{gf:even-odd}
\end{align}
The right hand side of \eqref{gf:l-sol} clearly splits as
$$\sum_{i,j\ge 0,\text{ $i$ even}}\frac{x^iy^{i+2j}q^{i^2+2ij+2j^2+j}}{(q;q)_i(q^2;q^2)_j}+\sum_{i,j\ge 0,\text{ $i$ odd}}\frac{x^iy^{i+2j}q^{i^2+2ij+2j^2+j}}{(q;q)_i(q^2;q^2)_j}.$$
Comparing this with \eqref{gf:even-odd} reveals the appropriate changes of variables for $x,y$, and establishes the two identities in theorem~\ref{thm:ref-d}.

\section{Two proofs of Theorem~\ref{thm:ref-alt}}\label{sec:pf thm2}

In this section, we provide two proofs of theorem~\ref{thm:ref-alt}. The first proof boils down to a closer analysis of Sylvester's bijective proof of Euler's theorem. We basicly follow the description given by Bessenrodt in \cite{bes94} and briefly recall it here. 

Given an odd partition $\la$ with $\Dur_2(\la)=k$, we denote the {\it hooks} in the $2$-modular diagram $[\la]_2$ as $h_1^{\la},\ldots,h_k^{\la}$. The largest hook $h_1^{\la}$ traverses the northwest border of the diagram and we think of it as being removed from $\la$, then the next hook $h_2^{\la}$ traverses the northwest border of what remained. So on and so forth, until we reach the innermost hook $h_k^{\la}$, with the cells along the main diagonal of $[\la]_2$ serving as the pivot for each hook. We have colored the hooks in the $2$-modular diagram in Fig.~\ref{fig:hook} below for better illustration. Furthermore, we denote by $l_1(h_i^{\la})$ the length of $h_i^{\la}$ and by $l_2(h_i^{\la})$ the number of $2$'s covered by the hook $h_i^{\la}$. Note that $l_2(h_k^{\la})$ could be zero since the last hook might contain no $2$'s. With the abbreviation $l_{2i-1}=l_1(h_i^{\la})$ and $l_{2i}=l_2(h_i^{\la})$, the sequence $S(\la):=(l_1,l_2,l_3,\ldots,l_{2k})$ is then taken to be the image (distinct) partition (dropping the last part zero if necessary) under Sylvester's map $S$. For the odd partition $\la=9+7^2+5+1^2$ depicted in Fig.~\ref{fig:hook}, its image is seen to be $S(\la)=(10,7,5,4,3,1)$.

\begin{figure}[ht]
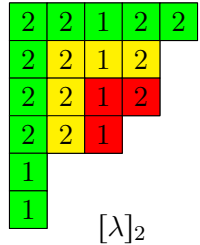

\begin{ferrers}
\addcellrows{5+4+4+3+1+1}
\addtext{1.5}{-3}{$[\la]_2$}
\highlightcellbycolor{1}{1}{green}
\highlightcellbycolor{1}{2}{green}
\highlightcellbycolor{1}{3}{green}
\highlightcellbycolor{1}{4}{green}
\highlightcellbycolor{1}{5}{green}
\highlightcellbycolor{2}{1}{green}
\highlightcellbycolor{3}{1}{green}
\highlightcellbycolor{4}{1}{green}
\highlightcellbycolor{5}{1}{green}
\highlightcellbycolor{6}{1}{green}
\highlightcellbycolor{2}{2}{yellow}
\highlightcellbycolor{2}{3}{yellow}
\highlightcellbycolor{2}{4}{yellow}
\highlightcellbycolor{3}{2}{yellow}
\highlightcellbycolor{4}{2}{yellow}
\highlightcellbycolor{3}{3}{red}
\highlightcellbycolor{4}{3}{red}
\highlightcellbycolor{3}{4}{red}

\highlightcellbyletter{1}{1}{2}
\highlightcellbyletter{1}{2}{2}
\highlightcellbyletter{1}{3}{1}
\highlightcellbyletter{1}{4}{2}
\highlightcellbyletter{1}{5}{2}
\highlightcellbyletter{2}{1}{2}
\highlightcellbyletter{2}{2}{2}
\highlightcellbyletter{2}{3}{1}
\highlightcellbyletter{2}{4}{2}
\highlightcellbyletter{3}{1}{2}
\highlightcellbyletter{3}{2}{2}
\highlightcellbyletter{3}{3}{1}
\highlightcellbyletter{3}{4}{2}
\highlightcellbyletter{4}{1}{2}
\highlightcellbyletter{4}{2}{2}
\highlightcellbyletter{4}{3}{1}
\highlightcellbyletter{5}{1}{1}
\highlightcellbyletter{6}{1}{1}
\end{ferrers}
\caption{The hooks in the $2$-modular diagram of $\lambda=9+7^2+5+1^2$}
\label{fig:hook}
\end{figure}

In order to derive an explicit formula for $S(\la)$, Bessenrodt~\cite[pp.~6]{bes94} observed the following subtle property of Sylvester's bijection.
\begin{align*}
& l_{2i-1}-l_{2i}-1=f_{2i-1} \text{ for $1\le i\le k-1$,}\\
& l_{2k-1}-l_{2k}-1=f_{2k-1} \text{ if $l_{2k}\neq 0$ and then $\la_k>2k-1$,}\\
& l_{2i}-l_{2i+1}-1=\frac{1}{2}(\la_i-\la_{i+1}) \text{ for $1\le i\le k-1$,}
\end{align*}
where $f_j$ is the number of occurrences of part $j$ in $\la$. Explained in words, the formulas above tell us the following facts.
\begin{Fact} \label{fact:seq}
For an odd partition $\la$ and its image $S(\la)=(l_1,l_2,\ldots,l_{2k})$, we have for $1\le i\le k-1$,
\begin{enumerate}[(i)]
	\item $l_{2i}$ begins a new sequence (viz., $l_{2i-1}-l_{2i}\ge 2$) if and only if there is a `small' part $2i-1$ in $\la$;
	\item $l_{2i+1}$ begins a new sequence if and only if there is also a gap $\ge 1$ between the `large' parts $\la_i$ and $\la_{i+1}$.
	\item If $l_{2k}>0$ then $l_{2k}$ begins a new sequence if and only if $2k-1$ is a part of $\la$.
\end{enumerate}
\end{Fact}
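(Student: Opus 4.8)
The plan is to read off each of the three equivalences directly from the three displayed relations of Bessenrodt, so that the proof reduces to translating arithmetic into the language of sequences. The only notion I need to fix beforehand is that a part $l_t$ of the strict partition $S(\la)$ \emph{begins a new sequence} exactly when $l_{t-1}-l_t\ge 2$, i.e.\ when it fails to be consecutive to its predecessor. With this convention in place, each of (i), (ii), (iii) becomes a one-line consequence of the corresponding Bessenrodt formula.

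First I would dispatch items (i) and (iii), which are identical in spirit. Bessenrodt's first relation reads $l_{2i-1}-l_{2i}-1=f_{2i-1}$, that is $l_{2i-1}-l_{2i}=f_{2i-1}+1$, for $1\le i\le k-1$. Hence $l_{2i}$ begins a new sequence, meaning $l_{2i-1}-l_{2i}\ge 2$, if and only if $f_{2i-1}\ge 1$, which is precisely the statement that the small part $2i-1$ occurs in $\la$; this is (i). For (iii) I invoke the second relation $l_{2k-1}-l_{2k}-1=f_{2k-1}$, which is stated exactly under the hypothesis $l_{2k}\neq 0$ (equivalently, $\la$ is of type I with $\la_k>2k-1$). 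Under that hypothesis the same rewriting gives $l_{2k-1}-l_{2k}\ge 2$ if and only if $f_{2k-1}\ge 1$, i.e.\ iff $2k-1$ is a part of $\la$. The only point of care in (iii) is to keep the argument inside the range $l_{2k}>0$ for which the second formula is valid.

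Item (ii) needs one extra identification, namely of what ``gap'' should mean. The third relation gives $l_{2i}-l_{2i+1}=\tfrac12(\la_i-\la_{i+1})+1$ for $1\le i\le k-1$, and here $\tfrac12(\la_i-\la_{i+1})$ is exactly the number of cells by which the $i$-th row exceeds the $(i+1)$-th row in the $2$-modular diagram $[\la]_2$, i.e.\ the gap between the large parts $\la_i$ and $\la_{i+1}$. Consequently $l_{2i+1}$ begins a new sequence, $l_{2i}-l_{2i+1}\ge 2$, if and only if this gap is at least $1$; and since all parts of $\la$ are odd, $\la_i-\la_{i+1}$ is a nonnegative even integer, so the condition is equivalent to $\la_i>\la_{i+1}$. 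This establishes (ii).

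The main obstacle is not located in the deductions above, which are routine bookkeeping, but in the three relations on which they rest: Bessenrodt's formulas encode how the hook statistics $l_1(h_i^{\la})$ and $l_2(h_i^{\la})$ evolve as one peels off successive hooks of $[\la]_2$ under Sylvester's map, and it is the correct tracking of these hook lengths that carries the genuine combinatorial weight. Since those relations are quoted from \cite{bes94} and may be taken as given here, the proof of Fact~\ref{fact:seq} collapses to the three short equivalences, with the parity remark in (ii) and the validity range of the second formula in (iii) being the only details requiring attention.
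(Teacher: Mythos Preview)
Your proposal is correct and matches the paper's own treatment: the paper simply states that Fact~\ref{fact:seq} is the verbal reformulation (``Explained in words'') of Bessenrodt's three displayed relations, and you have spelled out precisely those translations. There is nothing further to add.
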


\begin{Def}[the parity index]\label{def:par}
For any sequence $s=(s_1,s_2,\ldots,s_l)$ of integers, we define its {\it parity index}, denoted as $\pai(s)$, to be the number of times the associated sequence $$s_0=0,s_1,s_2,\ldots,s_l$$ switches parities when we scan it from left to right. In particular, the parity index of $s$ is zero if it consists exlusively of even numbers.
\end{Def}

Basing on fact~\ref{fact:seq} and definition~\ref{def:par}, we make the following key definition to capture the statistic that corresponds to $\sol(S(\la))$ under Sylvester's mapping $S$. This is also the last missing piece before we give our first proof of theorem~\ref{thm:ref-alt}.
\begin{Def}[the alternating index]\label{def:alt}
Given an odd partition $\la$ represented using its $2$-modular diagram with right border $1$'s (such as in Fig.~\ref{fig:right border}), we denote it by the triple $(k;\alpha,\beta)$ and let $\eta:=\alpha^*\bigcup\beta$, where $\alpha^{*}$ is the partition we get from conjugating ~\cite[Defn.~1.8]{andtp} the $2$-modular diagram of $\alpha$. Now consider the sequence
$$\tilde{\eta}:=(\eta_1,\ldots,\eta_s,\eta_{s+1})_{\le},$$
where $(\eta_1,\ldots,\eta_s)$ are the original parts of $\eta$ in non-decreasing order, while $\eta_{s+1}$ equals $2k$ if $\la$ is of type I (viz., $\la_k>2k-1$) and it equals $2k-1$ otherwise.
We define the {\it alternating index} of $\la$, denoted as $\alt(\la)$, to be the parity index of $\tilde{\eta}$.
\end{Def}

For our running example $\la=9+7^2+5+1^2$, we easily derive from Fig.~\ref{fig:right border} its triple representation $(3;4+2+2,5+1+1)$, and thus $\eta=6+5+2+1+1$, yielding the sequence $$\tilde{\eta}=(1,1,2,5,6,6).$$
Clearly it changes parity for four times, so $\alt(\la)=\pai(\tilde{\eta})=4$, which agrees with the number of sequences of odd length in its image $S(\la)=10+7+5+4+3+1$. This is of course no coincidence. 

\begin{proof}[1st proof of theorem~\ref{thm:ref-alt}]
It suffices to demonstrate that Sylvester's mapping $S$ is a weight preserving bijection that sends an odd partition $\la$ to a distinct partition $\rho:=S(\la)$ such that
\begin{align}
& \Dur_2(\la)=\lceil\frac{\ell(\rho)}{2}\rceil, \text{ and}\label{Dur-ell}\\
& \alt(\la)=\sol(\rho).\label{alt-sol}
\end{align}
Since \eqref{Dur-ell} is already noted in \cite[Prop.~2.2(i)]{bes94}, we only need to explain \eqref{alt-sol} here. 

By our construction of $\eta$ and fact~\ref{fact:seq}, we see $\eta$ contains an odd part $2i-1$ (for $1\le i\le k-1$) if and only if $l_{2i-1}-l_{2i}\ge 2$, in which case a sequence in $\rho$ ends at $l_{2i-1}$, an odd-indexed position, while $\eta$ contains an even part $2i$ (for $1\le i\le k-1$) if and only if $l_{2i}-l_{2i+1}\ge 2$, in which case a sequence in $\rho$ ends at $l_{2i}$, an even-indexed position. Note that for the statistic $\sol(\rho)$, we only care about sequences of odd length (``odd sequence'' for short) and ignore those with even length. Therefore, we get the first odd sequence in $\rho$ as soon as we encounter the smallest odd part of $\eta$. Then the next part we count must be even, corresponding to a new odd sequence in $\rho$, so on and so forth. The parity of the part that we care indeed alternates. Appending $\eta_{s+1}$ in the end is necessary to cope with the two possible cases separately, namely the case with $\la_{k}>2k-1$ (or $l_{2k}>0$ and it ends the last sequence of $\rho$) and the case with $\la_k=2k-1$ (or $l_{2k}=0$ hence $l_{2k-1}$ actually ends the last sequence of $\rho$). In summary, each parity change in $\tilde{\eta}$ corresponds to the termination of an odd sequence in $\rho$, thus \eqref{alt-sol} holds indeed.
\end{proof}

Alternatively, we can derive the generating function 
$$B^{\alt,\Dur_2}(x,y;q):=\sum_{\la\in\cO}x^{\alt(\la)}y^{\Dur_2(\la)}q^{\abs{\la}}=\sum_{n,k,m\ge 0}B(n,k,m)x^my^kq^n$$
according to definition~\ref{def:alt}, here $\cO$ denotes the set of partitions into odd parts. The following lemma is a key ingredient in our second proof of theorem~\ref{thm:ref-alt}.

\begin{lemma}\label{lem:gf-alt}
For a positive integer $m$, let $\cP_m$ denote the set of partitions whose largest part is exactly $m$, then we have the following identity for the generating function over $\cP_m$:
\begin{align}
\label{id:gf-alt}
\sum_{\la\in\cP_m}x^{\pai(\la)}q^{\abs{\la}} &=
\begin{cases}
q^{2k} \sum\limits_{0\le j\le k}\dfrac{x^{2j}q^{\binom{2j}{2}}}{(q;q)_{2j}(q^2;q^2)_{k-j}}, & \text{if $m=2k$ is even,}\\
q^{2k-1} \sum\limits_{1\le j\le k}\dfrac{x^{2j-1}q^{\binom{2j-1}{2}}}{(q;q)_{2j-1}(q^2;q^2)_{k-j}}, & \text{if $m=2k-1$ is odd.}\\
\end{cases}
\end{align}
\end{lemma}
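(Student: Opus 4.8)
The plan is to build the partitions in $\cP_m$ one part-size at a time, from $1$ up to $m$, while tracking the parity of the most recently used part. First I would record the structural fact behind Definition~\ref{def:par}: reading the parts of $\la$ in increasing order and prepending $0$, the value $\pai(\la)$ simply counts the parity switches, and since the scan starts at the even value $0$ and ends at the largest part $m$, the number $\pai(\la)$ is even exactly when $m$ is even and odd exactly when $m$ is odd. (Combinatorially each switch opens or closes a maximal run of odd distinct parts, so $\pai(\la)=2j$ or $2j-1$ with $j$ the number of such runs, which already explains the shape of the two cases.) It is convenient to pass to $G_m:=\sum_{\la}x^{\pai(\la)}q^{\abs{\la}}$ summed over all partitions with every part $\le m$ (the empty partition included, with $\pai=0$); since $\cP_m$ consists of those partitions with largest part exactly $m$, one has $\sum_{\la\in\cP_m}x^{\pai(\la)}q^{\abs{\la}}=G_m-G_{m-1}$.

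To compute $G_m$ I would introduce a transfer matrix. Let $P_{\mathrm e}^{(d)}$ and $P_{\mathrm o}^{(d)}$ denote the generating functions $\sum x^{\pai}q^{\abs{\cdot}}$ restricted to partitions with parts $\le d$ whose largest part is even (resp.\ odd), with $P_{\mathrm e}^{(0)}=1$ and $P_{\mathrm o}^{(0)}=0$. Using that the part $d$ occurs with multiplicity $\ge 1$ with weight $q^{d}/(1-q^{d})$ and that adjoining it flips the running parity --- costing a factor $x$ exactly when it does --- I obtain the one-step rules: for odd $d$,
\[
P_{\mathrm o}^{(d)}=\frac{1}{1-q^{d}}P_{\mathrm o}^{(d-1)}+\frac{xq^{d}}{1-q^{d}}P_{\mathrm e}^{(d-1)},\qquad P_{\mathrm e}^{(d)}=P_{\mathrm e}^{(d-1)},
\]
and for even $d$ the same rules with the roles of $\mathrm e$ and $\mathrm o$ interchanged; here $G_m=P_{\mathrm e}^{(m)}+P_{\mathrm o}^{(m)}$.

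Writing $U_k:=P_{\mathrm e}^{(2k)}$ and $V_k:=P_{\mathrm o}^{(2k)}$, the rules collapse to a clean recursion in $k$, and the engine of the proof is the guess
\[
U_k=\sum_{j=0}^{k}\frac{x^{2j}q^{\binom{2j}{2}+2j}}{(q;q)_{2j}(q^2;q^2)_{k-j}},\qquad V_k=\sum_{j=1}^{k}\frac{x^{2j-1}q^{\binom{2j-1}{2}+2j-1}}{(q;q)_{2j-1}(q^2;q^2)_{k-j}},
\]
which I would confirm by induction on $k$ from the two update rules (the base case $U_0=1$, $V_0=0$ being immediate). Granting this, one checks $P_{\mathrm e}^{(2k-1)}=U_{k-1}$ and $P_{\mathrm o}^{(2k-1)}=V_k$, so that $G_{2k}-G_{2k-1}=U_k-U_{k-1}$ and $G_{2k-1}-G_{2k-2}=V_k-V_{k-1}$. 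The last step is a telescoping simplification via $\frac{1}{(q^2;q^2)_{k-j}}-\frac{1}{(q^2;q^2)_{k-1-j}}=\frac{q^{2(k-j)}}{(q^2;q^2)_{k-j}}$, which turns $U_k-U_{k-1}$ into $q^{2k}\sum_{0\le j\le k}x^{2j}q^{\binom{2j}{2}}/\bigl((q;q)_{2j}(q^2;q^2)_{k-j}\bigr)$ and $V_k-V_{k-1}$ into $q^{2k-1}\sum_{1\le j\le k}x^{2j-1}q^{\binom{2j-1}{2}}/\bigl((q;q)_{2j-1}(q^2;q^2)_{k-j}\bigr)$, matching \eqref{id:gf-alt} for $m=2k$ and $m=2k-1$ respectively.

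The main obstacle is locating the correct closed forms for $U_k$ and $V_k$: the recursion mixes $P_{\mathrm e}$ and $P_{\mathrm o}$ at shifted indices, so the $q$-exponents $\binom{2j}{2}+2j$ and $\binom{2j-1}{2}+2j-1$ (rather than the bare $\binom{2j}{2}$, $\binom{2j-1}{2}$ visible in the answer) must be pinned down before the induction becomes routine; the concluding telescoping is precisely what reconciles these shifted exponents with \eqref{id:gf-alt}. A secondary point requiring care is the bookkeeping of multiplicities against parity switches when deriving the transfer rules, since a switch is charged once per distinct part value, not once per part.
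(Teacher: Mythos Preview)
Your transfer-matrix argument is correct and complete as outlined: the one-step rules are right (a parity switch is charged once per distinct value, as you note), the claimed closed forms for $U_k$ and $V_k$ do satisfy the two-step recursion (the verification reduces, at each fixed power of $x$, to the identity $(1-q^{2k-2j})q^{2j-1}+(1-q^{2j-1})q^{2k-1}=(1-q^{2k-1})q^{2j-1}$ and its even analogue), and the final telescoping recovers \eqref{id:gf-alt} exactly.

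However, your route is genuinely different from the paper's. The paper gives a one-shot bijective decomposition: for $\la\in\cP_{2k}$, each odd gap $\la_i-\la_{i+1}$ is stripped by subtracting $1$ from $\la_1,\ldots,\la_i$ and recording the part $i$ in a strict partition $\sigma$; what remains is a partition $\tau$ into even parts with largest part $2k-2j$, and conversely $\la'=\tau'\cup\sigma$. Thus $\sigma$ contributes $q^{\binom{2j+1}{2}}/(q;q)_{2j}$ and $\tau$ contributes $q^{2k-2j}/(q^2;q^2)_{k-j}$, yielding the formula immediately with no recursion or induction. The paper's decomposition explains at a glance where each factor comes from and is considerably shorter; your approach trades that conceptual clarity for a systematic machine that does not require discovering the bijection, at the cost of having to guess the intermediate forms $U_k,V_k$ (which you rightly flag as the crux) and carry out a coefficient-wise induction.
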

\begin{proof}
We only show the $m=2k$ even case, the $m$ odd case can be derived analogously and is thus omitted. Take any $\la\in\cP_{2k}$, its parity index must be an even number, say $\pai(\la)=2j$, $0\le j\le k$. Suppose $l=\ell(\la)$ and we scan the sequence 
$$(\la_{l+1}:=0,\la_l,\la_{l-1},\ldots,\la_1)$$ 
from left to right. We observe a parity change, say going from $\la_{i+1}$ to $\la_{i}$, if and only if $\la_{i}-\la_{i+1}$ is odd. In this caes we subtract $1$ from each of $\la_1,\la_2,\ldots,\la_{i}$ and we record this ``odd gap'' by adding a part $i$ to $\sigma$ (initially we set $\sigma=\epsilon$). After all the odd gaps have been handled and properly recorded in $\sigma$, we see that $\sigma$ becomes a partition into exactly $\pai(\la)=2j$ distinct parts, and we denote the partition that remained from $\la$ as $\tau$, which is seen to be a partition into even parts with the largest part being $2k-2j$. This decomposition of $\la$ into $\sigma$ and $\tau$ is invertible. Namely, $\la'=\tau'\bigcup\sigma$, where $\tau'$ is the conjugate partition of $\tau$; see Fig.~\ref{fig:decomp} below for a concrete example of this decomposition. Now $\sigma$ is generated by $q^{\binom{2j+1}{2}}/(q;q)_{2j}$ while $\tau$ is generated by $q^{2k-2j}/(q^2;q^2)_{k-j}$. Multiplying them together and summing over all possible values of $j$, we have
$$\sum_{\la\in\cP_{2k}}x^{\pai(\la)}q^{\abs{\la}}=\sum_{0\le j\le k}x^{2j}\cdot \frac{q^{\binom{2j+1}{2}}}{(q;q)_{2j}}\cdot \frac{q^{2k-2j}}{(q^2;q^2)_{k-j}}=q^{2k}\sum_{0\le j\le k}\frac{x^{2j}q^{\binom{2j}{2}}}{(q;q)_{2j}(q^2;q^2)_{k-j}}.$$
\end{proof}

\begin{figure}[ht]
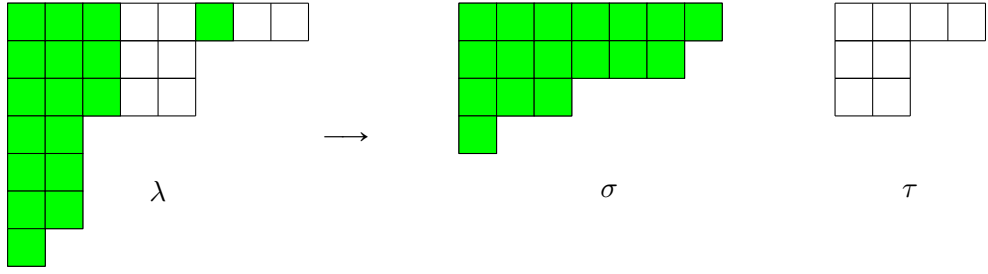

\begin{ferrers}
\addcellrows{8+5+5+2+2+2+1}
\addtext{2}{-2.5}{$\la$}
\addtext{4.5}{-1.8}{$\longrightarrow$}
\highlightcellbycolor{1}{1}{green}
\highlightcellbycolor{2}{1}{green}
\highlightcellbycolor{3}{1}{green}
\highlightcellbycolor{4}{1}{green}
\highlightcellbycolor{5}{1}{green}
\highlightcellbycolor{6}{1}{green}
\highlightcellbycolor{7}{1}{green}
\highlightcellbycolor{1}{2}{green}
\highlightcellbycolor{2}{2}{green}
\highlightcellbycolor{3}{2}{green}
\highlightcellbycolor{4}{2}{green}
\highlightcellbycolor{5}{2}{green}
\highlightcellbycolor{6}{2}{green}
\highlightcellbycolor{1}{3}{green}
\highlightcellbycolor{2}{3}{green}
\highlightcellbycolor{3}{3}{green}
\highlightcellbycolor{1}{6}{green}

\putright
\addcellrows{7+6+3+1}
\addtext{2}{-2.5}{$\sigma$}
\highlightcellbycolor{1}{1}{green}
\highlightcellbycolor{1}{2}{green}
\highlightcellbycolor{1}{3}{green}
\highlightcellbycolor{1}{4}{green}
\highlightcellbycolor{1}{5}{green}
\highlightcellbycolor{1}{6}{green}
\highlightcellbycolor{1}{7}{green}
\highlightcellbycolor{2}{1}{green}
\highlightcellbycolor{2}{2}{green}
\highlightcellbycolor{2}{3}{green}
\highlightcellbycolor{2}{4}{green}
\highlightcellbycolor{2}{5}{green}
\highlightcellbycolor{2}{6}{green}
\highlightcellbycolor{3}{1}{green}
\highlightcellbycolor{3}{2}{green}
\highlightcellbycolor{3}{3}{green}
\highlightcellbycolor{4}{1}{green}

\putright
\addcellrows{4+2+2}
\addtext{1}{-2.5}{$\tau$}
\end{ferrers}
\caption{The decomposition of $\la=8+5^2+2^3+1$ into $\sigma$ and $\tau$}
\label{fig:decomp}
\end{figure}

\begin{proof}[2nd proof of theorem~\ref{thm:ref-alt}]
Analyzing odd partitions via their $2$-modular diagrams and according to their types (either type I or type II), we derive an expression for $B^{\alt,\Dur_2}(x,y;q)$ that parallels \eqref{gf:two cases}. Given a partition $\la\in\cO$ with $\Dur_2(\la)=k$, recall the triple notation $\la=(k;\alpha,\beta)$ where $\alpha$ is a partition into at most $k$ even parts, and $\beta$ is a partition into odd parts not exceeding $2k-1$. Recall that we have defined $\eta=\alpha^*\bigcup\beta$ in definition~\ref{def:alt}. The rest of the proof splits into two cases.
\begin{enumerate}[(1)]
	\item If $[\la]_2$ is of type I, i.e., $\la_k>2k-1$, then $\alpha$ is a partition with exactly $k$ even parts. Consequently the union $\eta$ is a partition whose largest part is exactly $2k$. This implies in particular that $\pai(\tilde{\eta})=\pai(\eta)$ must be even. We apply \eqref{id:gf-alt} for $m=2k$ to deduce that $\eta$ is generated with respect to its parity index by
	$$q^{2k}\sum_{0\le j\le k}\frac{x^{2j}q^{2j^2-j}}{(q;q)_{2j}(q^2;q^2)_{k-j}}.$$
	Together with the contribution from the $2$-modular Durfee square we have the generating function for all odd partitions of type I.
	\begin{align}\label{gf:type I}
	y^{k}q^{k(2k+1)}\sum_{0\le j\le k}\frac{x^{2j}q^{2j^2-j}}{(q;q)_{2j}(q^2;q^2)_{k-j}}.
	\end{align}
	\item If $[\la]_2$ is of type II, i.e., $\la_k=2k-1$, then $\alpha$ is a partition with at most $k-1$ even parts. Consequently $\tilde{\eta}$ is a partition whose largest part is $2k-1$ and $\pai(\tilde{\eta})$ must be odd. We apply \eqref{id:gf-alt} for $m=2k-1$ to deduce that $\tilde{\eta}$ is generated by
	$$q^{2k-1}\sum_{1\le j\le k}\frac{x^{2j-1}q^{\binom{2j-1}{2}}}{(q;q)_{2j-1}(q^2;q^2)_{k-j}}.$$
	Hence odd partitions of type II are generated by (we need to multiply $q^{-(2k-1)}$ going from $\tilde{\eta}$ back to $\eta$)
	\begin{align}\label{gf:type II}
	y^kq^{k(2k-1)}\sum_{1\le j\le k}\frac{x^{2j-1}q^{\binom{2j-1}{2}}}{(q;q)_{2j-1}(q^2;q^2)_{k-j}}.
	\end{align}
\end{enumerate}
Now putting together \eqref{gf:type I} and \eqref{gf:type II} and summing over all $k\ge 1$, we get (again the term $1$ corresponds to the empty partition $\epsilon$):
\begin{align*}
B^{\alt,\Dur_2}(x,y;q) &= 1+\sum_{k\ge 1}y^k\left(\sum_{0\le j\le k}\frac{x^{2j}q^{2k^2+2j^2+k-j}}{(q;q)_{2j}(q^2;q^2)_{k-j}}+\sum_{1\le j\le k}\frac{x^{2j-1}q^{2k^2+2j^2-k-j}}{(q;q)_{2j-1}(q^2;q^2)_{k-j}}\right)\\
\text{\tiny $(j\to m,~k\to m+j)$} \: &= \sum_{m,j\ge 0}\frac{x^{2m}y^{m+j}q^{4m^2+4mj+2j^2+j}}{(q;q)_{2m}(q^2;q^2)_j}+\sum_{m\ge 1,\: j\ge 0}\frac{x^{2m-1}y^{m+j}q^{(2m-1)^2+2(2m-1)j+2j^2+j}}{(q;q)_{2m-1}(q^2;q^2)_j}.
\end{align*}
Comparing this last expression with \eqref{gf:two cases} results in $A(n,k,\lfloor m/2 \rfloor)=B(n,k,m)$, which proves theorem~\ref{thm:ref-alt} in view of theorem~\ref{thm:ref-d}.
\end{proof}

\section{Concluding remarks}\label{sec:conclusion}

After viewing the two proofs of theorem~\ref{thm:ref-alt} given in section~\ref{sec:pf thm2}, one naturally wonders if there exists a direct bijection transforming a partition $\la$ of $n$ into odd parts to a strict partition $\mu$ of $n$, such that $\Dur_2(\la)=\lceil\ell(\mu)/2\rceil$ and $\dur_2(\la)=\lfloor\sol(\mu)/2\rfloor$. Once found, this map will effectively prove theorem~\ref{thm:ref-d} bijectively. Besides Sylvester's bijection that proves theorem~\ref{thm:ref-alt}, there are at least two other bijections between odd partitions and strict partitions that we are aware of (see for instance Pak's survey~\cite[Sect.~3]{pak06}). They are Glaisher's map $\varphi_{g}$ and the iterated Dyson's map $\xi$. Unfortunately, neither of them preserves the statistics as required above. We give one example to illustrate this. For the partition $\la=11+3+1\in\A2_{15,2,0}$, we have
\begin{align*}
& 11+3+1\stackrel{\varphi_{g}}{\mapsto} 11+3+1\not\in\cD_{15,3,1},\\
& 11+3+1\stackrel{\xi}{\mapsto} 12+3\not\in\cD_{15,3,1}.
\end{align*}

In both \cite{FL24} and the current work, the statistic $\sol$ has been restricted to the set of strict partitions. Can a similar notion of ``$\sol$'' be introduced for other classes of restricted partitions, so as to give partition theoretical interpretations to other multi-sum $q$-series identities? Basing on our initial observations, several identities from recent works of Wang~\cite{wan24} and Li-Wang~\cite{LW24} are potential candidates for such a treatment. We aim to address them in our upcoming work~\cite{FL25}.

\section*{Acknowledgement}
Both authors were partially supported by the National Natural Science Foundation of China grant 12171059 and the Mathematical Research Center of Chongqing University.

\end{document}